\theoremstyle{plain}
\newtheorem{thm}{Theorem}
\newtheorem{lem}[thm]{Lemma}
\newtheorem{prop}[thm]{Proposition}
\newtheorem{cor}[thm]{Corollary}
\theoremstyle{definition}
\newtheorem{exl}[thm]{Example}
\newtheorem{question}[thm]{Question}
\numberwithin{thm}{section}
\DeclareMathOperator{\Fix}{Fix}
\DeclareMathOperator{\Coin}{Coin}
\DeclareMathOperator{\MF}{MF}
\def\Z{{\mathbb Z}}
\def\R{{\mathbb R}}
\def\vv{\mathbf v}
\def\ve{\mathbf e}
\def\vx{\mathbf x}
\def\vy{\mathbf y}
\def\vc{\mathbf c}
\begin{document}
\title{Partitions of $n$-valued maps}
\author{P. Christopher Staecker}

\maketitle

\begin{abstract}
An $n$-valued map is a set-valued continuous function $f$ such that $f(x)$ has cardinality $n$ for every $x$. Some $n$-valued maps will ``split'' into a union of $n$ single-valued maps. Characterizations of splittings has been a major theme in the topological theory of $n$-valued maps. 

In this paper we consider the more general notion of ``partitions'' of an $n$-valued map, in which a given map is decomposed into a union of other maps which may not be single-valued. We generalize several splitting characterizations which will describe partitions in terms of mixed configuration spaces and mixed braid groups, and connected components of the graph of $f$.
We demonstrate the ideas with some examples on tori. 

We also discuss the fixed point theory of $n$-valued maps and their partitions, and make some connections to the theory of finite-valued maps due to Crabb.
\end{abstract}

\section{Introduction}
Given sets $X$ and $Y$ and a positive integer $n$, an $n$-valued function from $X$ to $Y$ is a set-valued function $f$ on $X$ such that $f(x)\subseteq Y$ has cardinality exactly $n$ for every $x\in X$. Equivalently, an $n$-valued function on $X$ is a single-valued function $f:X \to D_n(Y)$, where $D_n(Y)$ is the \emph{unordered configuration space of $n$ points in $Y$}, defined as:
\[ D_n(Y) = \{ \{y_1,\dots,y_n\} \mid y_i \in Y, y_i \neq y_j \text{ for } i\neq j \}. \]

When $Y$ is a topological space, we give $D_n(Y)$ a topology as follows: begin with the product topology on $Y^n$, then consider the subspace $F_n(Y)$ of tuples $(y_1,\dots,y_n)\in Y^n$ with $y_i \neq y_j$ for $i\neq j$. This is the \emph{ordered configuration space}. Then $D_n(Y)$ is the quotient of $F_n(Y)$ up to ordering, and so its topology is given by the quotient topology. When $f:X\to D_n(Y)$ is continuous, we call it an \emph{$n$-valued map from $X$ to $Y$}. Continuity of $n$-valued maps can also be defined in terms of lower- and upper-semicontinuity. These approaches are equivalent: see \cite{bg18}.

An $n$-valued map $f:X\to D_n(Y)$ is \emph{split} if there are $n$ single valued continuous functions $f_1,\dots,f_n:X\to Y$ with $f(x) = \{ f_1(x),\dots,f_n(x) \}$ for every $x\in X$. In this case we write $f= \{f_1,\dots, f_n\}$, and we say that $\{f_1,\dots, f_n\}$ is a \emph{splitting} of $f$. Not all $n$-valued maps are split. 

In this paper we wish to generalize the notion of splitting to the case where the various $f_i$ are not necessarily single-valued. If $k\le n$ and $g:X\to D_k(X)$ is a $k$-valued map with $g(x) \subseteq f(x)$ for every $x$, then we say $g$ is a \emph{submap} of $f$. When $k<n$ we say $g$ is a \emph{proper submap}. If $f$ has no proper submap, then we say $f$ is \emph{irreducible}.

Let $f$ be an $n$-valued map and let $k_1,\dots,k_l$ be a partition of $n$, that is, positive integers which sum to $n$, and assume that there is a set of $l$ maps $f_1,\dots,f_l$, where $f_i$ is a $k_i$-valued map and $f(x) = f_1(x) \cup \dots \cup f_l(x)$ for each $x$. Then we say $\{f_1,\dots,f_l\}$ is a \emph{partition} of $f$, and we write $f=\{f_1,\dots,f_l\}$. If each $k_i = 1$, then this is a splitting. When each map $f_i$ is irreducible, we say $\{f_1,\dots,f_l\}$ is a \emph{partition of $f$ into irreducibles}. 

Consider a simple example:
\begin{exl} Let $f:S^1 \to D_n(S^1)$ be the 4-valued circle map given by 
\[ f(x) = \left\{\frac x 2, \frac x2+ \frac14, \frac x2+\frac12, \frac x2+\frac34\right\}. \] 
where $S^1$ is parameterized as real numbers read modulo 1. In the terminology of \cite{brow06}, $f$ is the linear 4-valued circle map of degree 2. The \emph{graph} of $f$, defined by $\{(x,y) \mid y \in f(x)\}$, looks like:
\[ 
\begin{tikzpicture}
\draw (0,0) rectangle (1,1);
\foreach \x in {0,1/4,1/2} {
	\draw (0,\x) -- (1,\x+1/2);
	}
\draw (0,3/4) -- (1/2,1);
\draw (1/2,0) -- (1,1/4);
\end{tikzpicture}
 \]
In this example, $f$ partitions into two 2-valued maps: 
\[ f_1(x) = \left\{\frac x2, \frac x2 + \frac12\right\},\quad f_2(x) = \left\{\frac x2+\frac14, \frac x2+\frac34\right\}, \] 
and this is a partition into irreducibles.
\end{exl}

We will discuss partitions of $n$-valued maps from several viewpoints: In Section \ref{graphsection} we recall existing work which relates partitions to the connected components of the graph of $f$. In Section \ref{configsection} we relate partitions to liftings of the map to mixed configuration spaces. 
In Section \ref{reidsection} we show how partitions relate to the theory of lifting classes in \cite{bdds20}, and in Section \ref{torisection} we show some results concerning partitions of linear $n$-valued maps on tori.   In Section \ref{jiangsection} we extend some work from \cite{bdds20} concerning the Jiang property for $n$-valued maps. We conclude in Section \ref{crabbsection} by pointing out some connections to the work of Crabb on the fixed point theory of finite-valued maps.

\section{Partitions and the graph of $f$}\label{graphsection}
In this paper all spaces will be assumed to be finite polyhedra.
For any function $f:X \to D_n(Y)$, let $\Gamma_f \subset X\times Y$ be the \emph{graph of $f$}, defined by:
\[ \Gamma_f = \{ (x,y) \in X\times X \mid y \in f(x) \}. \]

Robert F. Brown, in \cite{brow08}, studied partitions of $n$-valued maps, which he calls ``$w$-splittings''. Brown's Proposition 2.1 states that when $\Gamma_f$ has $k$ path components, then $f$ has a partition into $k$ maps. In fact Brown's proof implies the following more specific statement:
\begin{thm}[\cite{brow08}, Proposition 2.1]\label{graphpartition}
For any map $f:X\to D_n(Y)$, there is an irreducible partition $f = \{f_1,\dots,f_k\}$ if and only if $\Gamma_f$ has $k$ path components. In this case, the components of $\Gamma_f$ are exactly the sets $\Gamma_{f_k}$. 
\end{thm}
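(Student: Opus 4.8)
The plan is to prove the two directions by relating a partition of $f$ to a partition of the vertex set of $\pi_0(\Gamma_f)$ in a way that respects irreducibility. The key observation is that the projection $p:\Gamma_f\to X$, $(x,y)\mapsto x$, is a covering-like map: since $f$ is continuous into $D_n(Y)$ and $X$ is a finite polyhedron, $p$ is an $n$-fold branched-free covering in the sense that locally $\Gamma_f$ looks like $n$ disjoint sheets over $X$ (this is where finiteness of the polyhedral structure and continuity of $f$ are used). Consequently each path component $C$ of $\Gamma_f$ maps onto $X$ under $p$, and restricting $p$ to $C$ exhibits $C$ as the graph $\Gamma_{g_C}$ of a well-defined $k_C$-valued map $g_C:X\to D_{k_C}(Y)$, where $k_C$ is the (constant) number of sheets of $C$ over each point. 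Summing over components gives $\sum_C k_C = n$, so $\{g_C\}_{C\in\pi_0(\Gamma_f)}$ is a partition of $f$.

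For the forward direction, suppose $f=\{f_1,\dots,f_k\}$ is an irreducible partition. Then $\Gamma_f = \bigcup_i \Gamma_{f_i}$, and each $\Gamma_{f_i}$ is a union of path components of $\Gamma_f$ by the local-sheet description above (a path in $\Gamma_f$ cannot jump between the disjoint graphs $\Gamma_{f_i}$ and $\Gamma_{f_j}$ over a common point, and the $\Gamma_{f_i}$ are relatively open and closed in $\Gamma_f$). I claim each $\Gamma_{f_i}$ is in fact a single path component. If not, $\Gamma_{f_i}$ splits as a disjoint union of two nonempty closed-open pieces $\Gamma_{f_i} = A\sqcup B$; but then by the construction in the previous paragraph $A$ and $B$ are themselves graphs of maps $g_A, g_B$ with $f_i = \{g_A, g_B\}$, contradicting irreducibility of $f_i$. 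Hence $\Gamma_f$ has exactly $k$ path components, namely the $\Gamma_{f_i}$. Conversely, if $\Gamma_f$ has $k$ path components $C_1,\dots,C_k$, the construction above yields a partition $f = \{g_{C_1},\dots,g_{C_k}\}$ with $\Gamma_{g_{C_i}} = C_i$ connected; the same contradiction argument (any proper submap of $g_{C_i}$ would correspond to a nonempty proper clopen subset of the connected set $C_i$) shows each $g_{C_i}$ is irreducible, so this is an irreducible partition into $k$ maps.

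The main obstacle I anticipate is justifying rigorously that $p:\Gamma_f\to X$ behaves like a covering map with locally constant fiber cardinality, and in particular that each path component of $\Gamma_f$ surjects onto $X$ with a well-defined constant number of local sheets — this is what makes ``restrict $p$ to a component'' produce a genuine $k$-valued \emph{map} (continuous into $D_k(Y)$) rather than merely a set-valued assignment. One clean way to handle this: pull back along $F_n(Y)\to D_n(Y)$ to get, locally over contractible pieces of a polyhedral subdivision of $X$, an honest ordered lift $(f_1,\dots,f_n)$; then $\Gamma_f$ locally is $n$ disjoint graphs, and connectedness of $X$ forces the sheet count of each component to be globally constant. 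Once this local triviality is in hand, both implications follow from the purely combinatorial fact that partitions of $\Gamma_f$ into (unions of) path components correspond to partitions of $f$, with irreducible pieces matching connected pieces. I would also remark that this is essentially Brown's argument in \cite{brow08}, Proposition 2.1, repackaged to track the irreducibility refinement and the identification of components with the $\Gamma_{f_i}$.
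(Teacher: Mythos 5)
Your argument is correct and is essentially the approach the statement rests on: the paper itself gives no proof of Theorem \ref{graphpartition}, deferring to Brown's Proposition 2.1 in \cite{brow08}, whose proof is exactly this analysis of the projection $\Gamma_f \to X$ as an $n$-fold covering via local splittings over evenly covered neighborhoods, with irreducible pieces of $f$ corresponding to connected pieces of $\Gamma_f$. The only point worth stating explicitly is that $X$ must be path connected (as is implicitly assumed throughout the paper), since otherwise a component of $\Gamma_f$ need not surject onto $X$ and its sheet count need not be constant, which is what your construction of the submaps $g_C$ requires.
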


It is well known that splittings, when they exist, are unique. 
Since $\Gamma_f$ has a unique decomposition into its path components, we obtain the following generalization:
\begin{cor}
Any map $f:X\to D_n(Y)$, has a unique irreducible partition as $f=\{f_1,\dots,f_k\}$, where perhaps $k=1$ and the partition is unique up to the ordering of the $f_i$.
\end{cor}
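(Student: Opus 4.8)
The plan is to reduce the statement to the uniqueness of the decomposition of a space into its path components, using Theorem~\ref{graphpartition}. First I would record that $\Gamma_f$ has only finitely many path components: the projection $\Gamma_f \to X$, $(x,y)\mapsto x$, is an $n$-fold covering map (an $n$-valued map into a configuration space splits locally), so $\Gamma_f$ is a compact, locally path-connected polyhedron and thus has some finite number $k$ of path components, which coincide with its connected components. Theorem~\ref{graphpartition} then immediately yields \emph{existence}: there is an irreducible partition $f = \{f_1,\dots,f_k\}$ whose graphs $\Gamma_{f_1},\dots,\Gamma_{f_k}$ are exactly those path components (with $k=1$ precisely when $\Gamma_f$ is connected, i.e.\ when $f$ is itself irreducible).

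For \emph{uniqueness}, let $f = \{g_1,\dots,g_m\}$ be any irreducible partition, with $g_j$ a $k_j$-valued map. I would first check that $\{\Gamma_{g_1},\dots,\Gamma_{g_m}\}$ is again precisely the set of path components of $\Gamma_f$. Since $n = k_1+\dots+k_m$ while $|g_j(x)| = k_j$, for each fixed $x$ the sets $g_1(x),\dots,g_m(x)$ are pairwise disjoint, so $\Gamma_f = \Gamma_{g_1}\sqcup\dots\sqcup\Gamma_{g_m}$. Each $\Gamma_{g_j}$ is closed in $\Gamma_f$ (upper semicontinuity), and being the complement there of the remaining finitely many closed pieces it is also open; hence each $\Gamma_{g_j}$ is a clopen union of path components. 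Finally each $\Gamma_{g_j}$ is connected: a proper clopen subset of $\Gamma_{g_j}$ would, via the covering $\Gamma_{g_j}\to X$, be the graph of a proper submap of $g_j$, contradicting irreducibility. (Alternatively, this step is already contained in Theorem~\ref{graphpartition} itself, applied to the partition $\{g_1,\dots,g_m\}$.) So the $\Gamma_{g_j}$ are the path components of $\Gamma_f$.

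Since a space has a unique decomposition into path components, the two families $\{\Gamma_{f_1},\dots,\Gamma_{f_k}\}$ and $\{\Gamma_{g_1},\dots,\Gamma_{g_m}\}$ must coincide. In particular $m=k$, and after reindexing $\Gamma_{g_i}=\Gamma_{f_i}$ for every $i$; since a $k_i$-valued map is recovered from its graph via $f_i(x)=\{y : (x,y)\in\Gamma_{f_i}\}$, this gives $g_i=f_i$. Thus the irreducible partition is unique up to the order of its members.

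The only genuinely delicate point is the covering-space bookkeeping: that $\Gamma_f\to X$ and each $\Gamma_{g_j}\to X$ are honest finite-sheeted covering maps and that clopen subsets of these graphs correspond to submaps. All of this is already implicit in Brown's proof of Theorem~\ref{graphpartition}, so in the final write-up I would most likely not reprove it, and instead simply invoke Theorem~\ref{graphpartition} to obtain that the path components of $\Gamma_f$ are the graphs of the members of any irreducible partition, and then conclude from the uniqueness of the path-component decomposition.
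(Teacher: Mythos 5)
Your proposal is correct and is essentially the paper's argument: the paper derives the corollary directly from Theorem~\ref{graphpartition} together with the observation that $\Gamma_f$ has a unique decomposition into path components, which is exactly your reduction. The extra verification you sketch (that the graphs of any irreducible partition are clopen, connected, and hence are the path components, and that a submap is recovered from its graph) is already packaged into the ``only if'' direction of Theorem~\ref{graphpartition}, so your final one-line write-up invoking that theorem plus uniqueness of the path-component decomposition is exactly what the paper does.
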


\section{Partitions and mixed configuration spaces}\label{configsection}

A basic criterion for the existence of a splitting involves the covering of the ordered configuration space over the unordered configuration space. As discussed in \cite[Section 2.1]{gg18}, the map $r:F_n(Y)\to D_n(Y)$ which forgets the ordering of an $n$-element configuration is a covering map, and a map $f:X\to D_n(Y)$ splits if and only if it has a lifting $\hat f:X\to F_n(Y)$. In this section we generalize this result for partitions.

As in \cite{gg04}, p.131, if $(d_1,\dots,d_k)$ is a partition of $n$ (that is, the $d_i$ are positive integers summing to $n$), let $D_{d_1,\dots,d_k}(Y)$ be the quotient of $F_n(Y)$ by $\Sigma_{d_1}\times \dots \times \Sigma_{d_k}$, where $\Sigma_d$ is the symmetric group on $d$ elements. This is the ``mixed configuration space'' of $n$ points, in which certain subsets of the $n$ points are ordered. In particular when $k=2$ and $n = m + l$ we have 
\[ D_{m,l} (Y) = \{ (\{y_1,\dots, y_m\}, \{y_{m+1},\dots,y_n\}) \mid y_i \neq y_j \text{ for } i\neq j \} \]

\begin{prop}
For $m+l=n$, there are coverings $s_{m,l}:F_n(Y) \to D_{m,l}(Y)$ and $t_{m,l}: D_{m,l}(Y)\to D_n(Y)$ with $t_{m,l} \circ s_{m,l} = r$.
\end{prop}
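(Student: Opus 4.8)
The plan is to realize all three spaces as quotients of $F_n(Y)$ by nested subgroups of $\Sigma_n$ and then invoke the standard fact that a quotient of a covering space by a subgroup of the deck group is again a covering. Recall that $r:F_n(Y)\to D_n(Y)$ is the quotient by the full symmetric group $\Sigma_n$ acting by permuting coordinates, and this action is free and properly discontinuous (since the points $y_i$ are pairwise distinct), so $r$ is a covering map with deck group $\Sigma_n$. By definition $D_{m,l}(Y)$ is the quotient of $F_n(Y)$ by the subgroup $\Sigma_m\times\Sigma_l\le\Sigma_n$, where $\Sigma_m$ permutes the first $m$ coordinates and $\Sigma_l$ the last $l$. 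Since $\Sigma_m\times\Sigma_l$ is a subgroup of the group $\Sigma_n$ acting freely and properly discontinuously, its action on $F_n(Y)$ is also free and properly discontinuous, so the quotient map $s_{m,l}:F_n(Y)\to D_{m,l}(Y)$ is a covering map.

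Next I would observe that since $\Sigma_m\times\Sigma_l$ is a subgroup of $\Sigma_n$, the quotient map $r$ factors through $s_{m,l}$: there is a well-defined continuous surjection $t_{m,l}:D_{m,l}(Y)\to D_n(Y)$ sending the $(\Sigma_m\times\Sigma_l)$-orbit of $(y_1,\dots,y_n)$ to its $\Sigma_n$-orbit $\{y_1,\dots,y_n\}$. Continuity of $t_{m,l}$ is automatic from the universal property of the quotient topology on $D_{m,l}(Y)$, since $r$ is continuous and constant on $(\Sigma_m\times\Sigma_l)$-orbits. By construction $t_{m,l}\circ s_{m,l}=r$, which gives the stated factorization.

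It remains to check that $t_{m,l}$ itself is a covering map. Here I would use the fact that $t_{m,l}$ is the quotient of $D_{m,l}(Y)$ by the residual action of the finite group $\Sigma_n/(\Sigma_m\times\Sigma_l)$-worth of coset representatives — more precisely, that $t_{m,l}$ is a covering because $r$ and $s_{m,l}$ are coverings and $s_{m,l}$ is surjective: given $z\in D_n(Y)$ with an evenly-covered neighborhood $U$ for $r$, one checks that $U$ (or a smaller neighborhood) is also evenly covered by $t_{m,l}$, using that $s_{m,l}^{-1}(t_{m,l}^{-1}(U))=r^{-1}(U)$ is a disjoint union of sheets and that $s_{m,l}$ maps each sheet homeomorphically onto a sheet of $t_{m,l}^{-1}(U)$. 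Alternatively, one can cite the general principle that if $p=q\circ s$ where $p$ and $s$ are coverings and $s$ is surjective, then $q$ is a covering (a standard lemma, valid when the base is reasonable, e.g.\ a polyhedron).

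The main obstacle is the last step: verifying that $t_{m,l}$ is a covering rather than merely a quotient map. The subtlety is that the residual ``group'' $\Sigma_n/(\Sigma_m\times\Sigma_l)$ is only a set of cosets, not a group, so one cannot simply quote the free-properly-discontinuous-action criterion directly on $D_{m,l}(Y)$. The cleanest fix is to do the even-covering check by hand on a small neighborhood $U$ of a point of $D_n(Y)$ chosen so that $r^{-1}(U)$ is a disjoint union of $n!$ sheets indexed by $\Sigma_n$; grouping these sheets according to left cosets of $\Sigma_m\times\Sigma_l$ gives the sheet decomposition of $t_{m,l}^{-1}(U)$, each sheet being the homeomorphic image under $s_{m,l}$ of one of the original sheets. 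This makes $t_{m,l}$ a $\binom{n}{m}$-fold covering and completes the proof.
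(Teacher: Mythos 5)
Your proposal is correct and constructs exactly the same maps as the paper: the paper's proof simply writes down the formulas for $s_{m,l}$ and $t_{m,l}$ (the quotient maps you describe) and leaves the verification that they are coverings to standard covering-space theory for quotients by subgroups of $\Sigma_n$. Your extra care with the last step---grouping the $n!$ sheets of $r$ into $\binom{n}{m}$ cosets of $\Sigma_m\times\Sigma_l$ to see that $t_{m,l}$ is evenly covered---is a correct and welcome elaboration of what the paper takes for granted.
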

\begin{proof}
The required coverings are simply:
\begin{gather*}
s_{m,l}(y_1,\dots,y_n) = (\{y_1,\dots,y_m\},\{y_{m+1},\dots,y_n\}) \\
t_{m,l} (\{y_1,\dots,y_m\},\{y_{m+1},\dots,y_n\}) = \{y_1,\dots,y_n\}. \qedhere
\end{gather*}
\end{proof}

The following theorem generalizes the splitting criterion discussed above.
\begin{thm}\label{partlift}
Let $f:X\to D_n(Y)$ be an $n$-valued map. Then $f$ has a proper partition if and only if there are positive integers $m$ and $l$ with $m+l=n$ and $f$ lifts to a map $\hat f:X \to D_{m,l}(Y)$.
\end{thm}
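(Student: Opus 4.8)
The strategy is to identify the mixed configuration space $D_{m,l}(Y)$ with a concrete subspace of the product $D_m(Y)\times D_l(Y)$, and then read off both implications almost formally. The point is that splitting an $n$-tuple into its first $m$ and last $l$ entries gives an embedding $F_n(Y)\hookrightarrow F_m(Y)\times F_l(Y)$ whose image is the open set of pairs of tuples that are entrywise distinct from one another (its complement being the closed ``collision'' locus $\bigcup_{i\le m<j}\{y_i=y_j\}$). This image is invariant under $\Sigma_m\times\Sigma_l$, and since the quotient map $F_m(Y)\times F_l(Y)\to D_m(Y)\times D_l(Y)$ is a covering, hence open, passing to quotients identifies $D_{m,l}(Y)$ with the open subspace of $D_m(Y)\times D_l(Y)$ consisting of pairs $(A,B)$ of disjoint configurations, $|A|=m$, $|B|=l$. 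Under this identification $t_{m,l}$ becomes $(A,B)\mapsto A\cup B$, and the two coordinate projections $\pi_1\colon D_{m,l}(Y)\to D_m(Y)$ and $\pi_2\colon D_{m,l}(Y)\to D_l(Y)$ are continuous, being descents of the $\Sigma_m\times\Sigma_l$-invariant maps $F_n(Y)\to F_m(Y)\to D_m(Y)$ and $F_n(Y)\to F_l(Y)\to D_l(Y)$.

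\textbf{The ``only if'' direction.}
Given a proper partition, I would first reduce to two pieces: if $f=\{g_1,\dots,g_j\}$ with $j\ge 2$, then $\{\,g_1,\ g_2\cup\dots\cup g_j\,\}$ is a partition of $f$ into an $m$-valued map and an $(n-m)$-valued map with $1\le m<n$. Writing this two-piece partition as $\{g,h\}$ with $g$ an $m$-valued map and $h$ an $l$-valued map, $m+l=n$, define $\hat f(x)=(g(x),h(x))$. Since $|g(x)|+|h(x)|=m+l=n=|f(x)|=|g(x)\cup h(x)|$, the sets $g(x)$ and $h(x)$ are disjoint, so $\hat f(x)$ lands in the disjoint-pairs subspace, i.e.\ in $D_{m,l}(Y)$. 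Continuity of $\hat f$ is immediate from continuity of $g$ and $h$ as maps into $D_m(Y)\times D_l(Y)$, using that $D_{m,l}(Y)$ carries the subspace topology, and $t_{m,l}\circ\hat f=g\cup h=f$.

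\textbf{The ``if'' direction.}
Conversely, suppose $f$ lifts to $\hat f\colon X\to D_{m,l}(Y)$ with $t_{m,l}\circ\hat f=f$ for some $m+l=n$, $m,l\ge 1$. Set $g=\pi_1\circ\hat f$ and $h=\pi_2\circ\hat f$; these are continuous by the discussion above, and $m$- and $l$-valued respectively. For each $x$ we have $g(x)\cup h(x)=t_{m,l}(\hat f(x))=f(x)$, so $\{g,h\}$ is a partition of $f$, and it is proper since $1\le m<n$.

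\textbf{Main obstacle.}
The only genuine content is the first step: identifying $D_{m,l}(Y)$ with the open subspace of disjoint pairs in $D_m(Y)\times D_l(Y)$ and deducing continuity of the projections $\pi_1,\pi_2$. One must be a little careful that the quotient topology on $D_{m,l}(Y)$ inherited from $F_n(Y)$ agrees with the subspace topology coming from $D_m(Y)\times D_l(Y)$; this is exactly where the openness of the covering $F_m(Y)\times F_l(Y)\to D_m(Y)\times D_l(Y)$, together with saturation of the open set $F_n(Y)$, is used. After that, both implications are purely formal manipulations of configurations.
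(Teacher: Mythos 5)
Your proof is correct and follows essentially the same route as the paper's: the lift is the pair $(g(x),h(x))$ and the partition is recovered from the two coordinate projections. The only difference is that you carefully justify the identification of $D_{m,l}(Y)$ with the disjoint-pairs subspace of $D_m(Y)\times D_l(Y)$ and the continuity of the projections, details the paper leaves implicit.
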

\begin{proof}
First assume that $f$ has a proper partition, say $f = g \cup h$ where $g:X\to D_m(Y)$ and $h:X \to D_l(Y)$. Then let $\hat f$ be defined by $\hat f(x) = (g(x),h(x))$. Since $f = g\cup h$, the sets $g(x)$ and $h(x)$ are disjoint for all $x$, and thus $\hat f$ is the desired lift $\hat f:X \to D_{m,l}(Y)$. 

For the converse, assume that $f$ has a lift $\hat f:X\to D_{m,l}(Y)$. This $\hat f$ will be a pair of functions of the form $f(x) = (g(x),h(x))$ where $g:X\to D_m(Y)$ and $h:X \to D_l(Y)$, and this $g$ and $h$ provide the desired partition.
\end{proof}

The fundamental group $\pi_1(D_n(X))$ is the $n$-strand braid group $B_n(X)$. 
When $(d_1,\dots,d_k)$ is a partition of $n$, let $B_{d_1,\dots,d_k}(Y) = \pi_1(D_{d_1,\dots,d_k}(Y))$. This is the \emph{mixed braid group} on $Y$ first defined in \cite{manf97}, the subgroup of $B_n(Y)$ in which the strands starting in the first $d_1$ positions must end in the first $d_1$ positions, the strands starting in the next $d_2$ positions must end in the next $d_2$ positions, etc. When all $d_i=1$, we have $B_{1,\dots,1}(Y) = P_n(Y) = \pi_1(F_n(Y))$, the pure braid group on $Y$. When $k=1$ and thus $d_1=n$, we obtain simply $B_n(Y)$, the full braid group on $Y$.

Our next theorem uses the ``lifting criterion'' from covering space theory: Let $p:\bar Y \to Y$ be a covering space and $f:X \to Y$ be a map with $X$ path connected and locally path connected. Then a lift $\bar f:X\to \bar Y$ exists if and only if $f_\#(\pi_1(X)) \subseteq p_\#(\pi_1(\bar Y))$. See \cite{hatc02}, Proposition 1.33.

The following is a generalization of the ``Splitting Characterization Theorem'', Theorem 3.1 of \cite{bg18}, which states that $f$ splits if and only if $f_\#(\pi_1(X))$ is a subgroup of $P_n(Y)$, where $\#$ denotes the induced homomorphism on the fundamental group. Recall the covering map $t_{m,l}: D_{m,l}(Y) \to D_n(Y)$ which forgets the ordering in $D_{m,l}(Y)$. Its induced homomorphism in fundamental groups is $t_{m,l\#}:B_{m,l}(Y) \to B_n(Y)$, which is the inclusion.
\begin{thm}
Let $f:X\to D_n(Y)$ be an $n$-valued map. Then $f$ has a proper partition if and only if there are positive integers $m$ and $l$ with $m+l=n$ such that $f_\#(\pi_1(X))$ is a subgroup of $t_{m,l\#}(B_{m,l}(Y))$.
\end{thm}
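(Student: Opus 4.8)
The plan is to deduce this directly from Theorem \ref{partlift} together with the covering-space lifting criterion quoted above. By Theorem \ref{partlift}, $f$ has a proper partition if and only if there exist positive integers $m,l$ with $m+l=n$ and a lift $\hat f\colon X \to D_{m,l}(Y)$; here ``lift'' is taken with respect to the covering $t_{m,l}\colon D_{m,l}(Y) \to D_n(Y)$, since the $\hat f$ produced in the proof of Theorem \ref{partlift} satisfies $t_{m,l}\circ \hat f = f$ (if $\hat f(x) = (g(x),h(x))$ then $t_{m,l}(g(x),h(x)) = g(x)\cup h(x) = f(x)$). So it suffices to show that, for a fixed pair $(m,l)$, a lift $\hat f$ along $t_{m,l}$ exists if and only if $f_\#(\pi_1(X)) \subseteq t_{m,l\#}(B_{m,l}(Y))$.

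This remaining equivalence is exactly the lifting criterion. The map $t_{m,l}$ is a covering by the Proposition above, $\pi_1(D_{m,l}(Y)) = B_{m,l}(Y)$ by definition, and $t_{m,l\#}$ is the inclusion $B_{m,l}(Y)\hookrightarrow B_n(Y)$. Feeding $p = t_{m,l}$ into the criterion gives: a lift exists if and only if $f_\#(\pi_1(X)) \subseteq t_{m,l\#}(\pi_1(D_{m,l}(Y))) = t_{m,l\#}(B_{m,l}(Y))$. Combining this with the equivalence from Theorem \ref{partlift} and quantifying over the finitely many choices of $(m,l)$ with $m+l=n$ proves the theorem; note that both directions of the ``if and only if'' come for free from chaining the two biconditionals, so no separate forward/backward arguments are needed.

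The one genuine point to watch is the hypothesis of the lifting criterion: it requires $X$ to be path connected and locally path connected. Since all spaces here are finite polyhedra, local path connectedness is automatic, but $X$ need not be connected. The cleanest fix is to assume $X$ is connected, as is standard in this setting (and as the very notation $f_\#(\pi_1(X))$ presupposes a basepoint); alternatively one applies the argument to each path component of $X$ in turn, observing that a global lift exists precisely when a lift exists over every component, provided the same pair $(m,l)$ works on all of them. A second, more cosmetic, subtlety is that $t_{m,l}$ is not a normal covering, since $\Sigma_m\times\Sigma_l$ is not normal in $\Sigma_n$; strictly, the lifting criterion yields containment in a conjugate of $t_{m,l\#}(B_{m,l}(Y))$ depending on the chosen lift of the basepoint. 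With the basepoint conventions already implicit in the statement (the identification of $B_{m,l}(Y)$ with a fixed subgroup of $B_n(Y)$) this is harmless, and I would dispatch it with a remark rather than carry conjugation through the whole argument.
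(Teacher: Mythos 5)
Your proposal is correct and follows essentially the same route as the paper: combine Theorem \ref{partlift} with the covering-space lifting criterion applied to the covering $t_{m,l}\colon D_{m,l}(Y)\to D_n(Y)$. The extra remarks about connectedness of $X$ and the basepoint/conjugacy subtlety are sensible hygiene but do not change the argument.
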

\begin{proof}
By Theorem \ref{partlift}, $f$ has a proper partition if and only if there are $m,l$ with $m+l=n$ and $f$ lifts to some $\hat f:X \to D_{m,l}(Y)$. By the lifting criterion, this is equivalent to $f_\#(\pi_1(X)) \subseteq t_{m,l\#}(B_{m,l}(Y))$.
\end{proof}

\section{Partitions in fixed point theory}\label{reidsection}
A \emph{fixed point} of such a map is a point $x\in X$ with $x \in f(x)$, and we denote the set of fixed points of $f$ by $\Fix(f)\subset X$. Topological fixed point theory of $n$-valued maps was first studied by Schirmer in \cite{schi84a, schi84b}. 

For the rest of the paper we will focus on selfmaps $f:X\to D_n(X)$. First we briefly review the theory of lifting classes and Reidemeister classes for $n$-valued maps which is developed in \cite{bdds20}. Given the universal covering $p:\tilde X \to X$, the \emph{orbit configuration space} with respect to this cover is:
\[ F_n(\tilde X, \pi) = \{ (\tilde x_1,\dots,\tilde x_n) \in \tilde X^n \mid p(\tilde x_i) \neq p(\tilde x_j) \text{ for $i \neq j$} \}, \]
and the map $p^n:F_n(\tilde X,\pi) \to D_n(X)$ given by applying $p$ to each coordinate is a covering map.

Given a map $f:X\to D_n(X)$ and some choices of basepoints for $\tilde X$ and $F_n(\tilde X,\pi)$, there is a well-defined \emph{basic lifting} $\bar f: \tilde X \to F_n(\tilde X,\pi)$ so that the diagram commutes:
\[ 
\begin{tikzcd}
\tilde X \arrow[r,"\bar f"] \arrow[d,"p"] & F_n(\tilde X,\pi) \arrow[d,"p^n"] \\
X \arrow[r,"f"] & D_n(X)
\end{tikzcd}  
\]
This basic lifting $\bar f$ is a lifting of $f$ with respect to the covering $p^n:F_n(\tilde X,\pi) \to D_n(X)$. The covering group of $F_n(\tilde X,\pi)$ is $\pi_1(X)^n \rtimes \Sigma_n$, where $\Sigma_n$ is the symmetric group on $n$ elements. Given $\alpha \in \pi_1(X)$, we have the basic lift $\bar f$, and viewing $\alpha$ as a covering transformation on $\tilde X$ gives $\bar f \circ \alpha$, which is some other lift of $f$. 

Lemma 2.5 of \cite{bdds20} describes how any $n$-valued map $f$ determines a homomorphism $\psi_f:\pi_1(X) \to \pi_1(X)^n\rtimes \Sigma_n$ by the formula:
\[ \psi_f(\alpha) \circ \bar f = \bar f \circ \alpha. \]

Writing $\psi_f$ in coordinates as $\psi_f(\alpha) = (\phi_1(\alpha),\dots,\phi_n(\alpha);\sigma(\alpha))$, we obtain functions $\phi_i:\pi_1(X) \to \pi_1(X)$ and $\sigma:\pi_1(X)\to \Sigma_n$ such that (writing $\sigma(\alpha) = \sigma_\alpha$):
\[ \bar f_i(\alpha \tilde x) = \phi_i(\alpha)\bar f_{\sigma^{-1}_{\alpha}(i)}(\tilde x). \]
The functions $\phi_i:\pi_1(X) \to \pi_1(X)$ are not necessarily homomorphisms, but $\sigma$ is.

Given some $i,j \in \{1,\dots,n\}$ and $\alpha,\beta\in \pi_1(X)$, the pairs $(\alpha,i)$ and $(\beta,j)$ are \emph{Reidemeister equivalent}, and we write $[(\alpha,i)] = [(\beta,j)]$, when there is some $\gamma\in \pi_1(X)$ with $\sigma_\gamma(j)=i$ and 
\[ \alpha = \gamma \beta \phi_i(\gamma^{-1}). \]
This is an equivalence relation, and the number of these equivalence classes is the \emph{Reidemeister number} $R(f)$. Any set of the form $p(\Fix(\alpha \bar f_i))$ for $\alpha\in \pi_1(X)$ is called a \emph{fixed point class}, and by \cite[Theorem 2.9]{bdds20} two nonempty fixed point classes $p(\Fix(\alpha \bar f_i))$ and $p(\Fix(\beta \bar f_j))$ are equal if and only if $[(\alpha,i)] = [(\beta,j)]$, and are disjoint when $[(\alpha,i)] \neq [(\beta,j)]$.  

For $i,j \in \{1,\dots,n\}$, we will write $i\sim j$ when there is some $\gamma \in \pi_1(X)$ with $\sigma_\gamma(j) = i$.

\begin{prop}
The relation $\sim$ is an equivalence relation. 
\end{prop}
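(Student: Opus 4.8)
The plan is to verify the three axioms of an equivalence relation directly, using the single structural fact available about $\sigma$: it is a genuine group homomorphism $\sigma:\pi_1(X)\to\Sigma_n$ (this was noted just before the statement). Everything reduces to translating statements about $\sigma_\gamma$ back and forth through the homomorphism property $\sigma_{\gamma\delta}=\sigma_\gamma\sigma_\delta$ and $\sigma_{\gamma^{-1}}=\sigma_\gamma^{-1}$, together with $\sigma_1=\mathrm{id}$.

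First I would establish reflexivity: taking $\gamma$ to be the identity of $\pi_1(X)$ gives $\sigma_\gamma=\mathrm{id}$, so $\sigma_\gamma(i)=i$ and hence $i\sim i$. Next, for symmetry, suppose $i\sim j$, witnessed by some $\gamma$ with $\sigma_\gamma(j)=i$. Then $\sigma_{\gamma^{-1}}=\sigma_\gamma^{-1}$, so applying $\sigma_{\gamma^{-1}}$ to both sides of $\sigma_\gamma(j)=i$ yields $\sigma_{\gamma^{-1}}(i)=j$, which shows $j\sim i$. Finally, for transitivity, suppose $i\sim j$ via $\gamma$ (so $\sigma_\gamma(j)=i$) and $j\sim k$ via $\delta$ (so $\sigma_\delta(k)=j$). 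Then $\sigma_{\gamma\delta}(k)=\sigma_\gamma(\sigma_\delta(k))=\sigma_\gamma(j)=i$, so $\gamma\delta$ witnesses $i\sim k$.

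There is no real obstacle here: the only thing that could go wrong is if $\sigma$ failed to be a homomorphism, but that is exactly the property recorded in the text (in contrast to the $\phi_i$, which are not homomorphisms). So the proof is a short, routine computation, and I would present it essentially as the three bullet verifications above, each occupying a sentence.
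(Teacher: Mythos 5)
Your proof is correct and follows exactly the same route as the paper's: reflexivity from $\sigma_1=\mathrm{id}$, symmetry from $\sigma_{\gamma^{-1}}=\sigma_\gamma^{-1}$, and transitivity from $\sigma_{\gamma\delta}=\sigma_\gamma\sigma_\delta$, all resting on the single fact that $\sigma$ is a homomorphism. No differences worth noting.
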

\begin{proof}
The required properties will follow from the fact that $\sigma$ is a homomorphism.

For reflexivity, since $\sigma$ is a homomorphism we have $\sigma_1(i)=i$ and thus $i\sim i$.

For symmetry, let $i\sim j$ with $\sigma_\alpha(j) = i$. Then $\sigma_{\alpha^{-1}}(i) = \sigma_\alpha^{-1}(i) = j$ and so $j \sim i$. 

For transitivity, let $i\sim j$ and $j\sim k$ with $\sigma_{\alpha}(j)=i$ and $\sigma_{\beta}(k)=j$. Then:
\[ \sigma_{\alpha\beta}(k) = \sigma_\alpha(\sigma_\beta(k)) = \sigma_\alpha(j) = i \]
and so $i\sim k$.
\end{proof}

The relation $\sim$ divides the basic lift $\bar f = \{\bar f_1,\dots,\bar f_n\}$ into equivalence classes where $\bar f_i$ and $\bar f_j$ are in the same class when $i \sim j$. These classes are called the \emph{$\sigma$-classes} of the basic lift. 

The next two results relate partitions of $f$ to the structure of the $\sigma$-classes.

\begin{thm}
Let $f:X\to D_n(X)$ be an $n$-valued map, and let $\bar f:\tilde X \to F_n(\tilde X,\pi)$ be the basic lift.
Let $\bar g = \{\bar f_{i_1},\dots,\bar f_{i_k}\} \subset \{\bar f_1,\dots,\bar f_n\} = \bar f$ be a $\sigma$-class. Then there is a $k$-valued submap of $f$ which lifts to $\bar g$. 
\end{thm}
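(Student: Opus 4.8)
The plan is to build the submap directly out of the coordinates of the basic lift that are indexed by the $\sigma$-class. Writing $I=\{i_1,\dots,i_k\}$ for the index set of $\bar g$ and $\pi=\pi_1(X)$, I would, for each $x\in X$, fix a lift $\tilde x\in\tilde X$ and set
\[ g(x)=\{\,p(\bar f_{i_1}(\tilde x)),\dots,p(\bar f_{i_k}(\tilde x))\,\}. \]
Then I would check, in order: that $g(x)$ does not depend on the choice of $\tilde x$; that $g(x)$ has cardinality exactly $k$; that $g(x)\subseteq f(x)$; that $g$ is continuous (hence a $k$-valued submap of $f$); and finally that the tuple $(\bar f_{i_1},\dots,\bar f_{i_k})$, viewed as a map $\tilde X\to F_k(\tilde X,\pi)$, is a lift of $g$ with respect to the covering $p^k:F_k(\tilde X,\pi)\to D_k(X)$.

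The one step needing a genuine idea — and the step I expect to be the main obstacle — is well-definedness, and it should reduce to the fact that $\sigma$ is a homomorphism, i.e. that a $\sigma$-class is invariant under every $\sigma_\alpha$. Any other lift of $x$ has the form $\alpha\tilde x$ for some $\alpha\in\pi$, and the formula $\bar f_i(\alpha\tilde x)=\phi_i(\alpha)\,\bar f_{\sigma^{-1}_\alpha(i)}(\tilde x)$, together with the fact that $\phi_i(\alpha)$ acts as a deck transformation of $p$, gives $p(\bar f_i(\alpha\tilde x))=p(\bar f_{\sigma^{-1}_\alpha(i)}(\tilde x))$. I would then note that $\sigma^{-1}_\alpha$ permutes $I$: if $i\in I$ then $\sigma_\alpha(\sigma^{-1}_\alpha(i))=i$ witnesses $i\sim\sigma^{-1}_\alpha(i)$, so $\sigma^{-1}_\alpha(i)\in I$, and injectivity of $\sigma^{-1}_\alpha$ finishes the point. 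Consequently the unordered set $\{\,p(\bar f_i(\tilde x))\mid i\in I\,\}$ is unchanged when $\tilde x$ is replaced by $\alpha\tilde x$, so $g$ is well defined.

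The remaining points I would treat as routine. Cardinality $k$ is immediate because $\bar f(\tilde x)\in F_n(\tilde X,\pi)$ forces $p(\bar f_i(\tilde x))\neq p(\bar f_j(\tilde x))$ for $i\neq j$; the submap property $g(x)\subseteq f(x)$ holds because $f(x)=p^n(\bar f(\tilde x))=\{p(\bar f_1(\tilde x)),\dots,p(\bar f_n(\tilde x))\}$. For continuity I would let $\rho:F_n(\tilde X,\pi)\to F_k(\tilde X,\pi)$ be the continuous map that retains only the coordinates in $I$; then by construction $g\circ p=p^k\circ\rho\circ\bar f$, and since $p$ is a covering map, hence a quotient map, and the right-hand side is continuous, $g$ is continuous. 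Finally, that same identity $p^k\circ(\rho\circ\bar f)=g\circ p$ says exactly that $\rho\circ\bar f=(\bar f_{i_1},\dots,\bar f_{i_k})$ is a lift of $g$ through $p^k$, which completes the argument. The real content is entirely in the observation of the second paragraph.
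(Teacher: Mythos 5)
Your proposal is correct and takes essentially the same approach as the paper: define $g(x)=p^k(\bar g(\tilde x))$ and reduce everything to well-definedness, which follows because $\phi_i(\alpha)$ acts as a deck transformation and the $\sigma$-class $\{i_1,\dots,i_k\}$ is invariant under each $\sigma_\alpha$. Your extra attention to continuity, cardinality, and the lift property just spells out what the paper treats as immediate.
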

\begin{proof}
We define $g:X\to D_k(X)$ by $g(x)= p^k(\bar g(\tilde x))$, where $\tilde x \in \tilde X$ is some point with $p(\tilde x) = x$. This $g$ is a $k$-valued submap of $f$ which lifts to $\bar g$. We need only show that $g$ is well-defined with respect to the choice of point $\tilde x \in p^{-1}(x)$. That is, we must show that $p^k(\bar g(\tilde x)) = p^k(\bar g(\alpha \tilde x))$ for any $\alpha \in \pi_1(X)$. 

Since $\{\bar f_{i_1},\dots,\bar f_{i_k}\}$ is a $\sigma$-class, we will have $\sigma_\alpha(\{i_1,\dots,i_k\}) \subseteq \{i_1,\dots,i_k\}$. And since $\sigma_\alpha$ is a permutation and these are both sets of $k$ elements, we will have $\sigma_\alpha(\{i_1,\dots,i_k\}) = \{i_1,\dots,i_k\}$. Thus we have
\begin{align*} 
p^k(\bar g(\alpha\tilde x)) &= \{p(\bar f_{i_1}(\alpha\tilde x)), \dots, p(\bar f_{i_k}(\alpha\tilde x))\} \\
&= \{p(\phi_{i_1}(\alpha)\bar f_{\sigma^{-1}_\alpha(i_1)}(\tilde x)), \dots, p(\phi_{i_k}(\alpha)\bar f_{\sigma^{-1}_\alpha(i_k)}(\tilde x)) \} \\
&= \{p(\bar f_{\sigma^{-1}_\alpha(i_1)}(\tilde x)), \dots, p(\bar f_{\sigma^{-1}_\alpha(i_k)}(\tilde x)) \} \\
&= \{p(\bar f_{i_1}(\tilde x)), \dots, p(\bar f_{i_k}(\tilde x)) \} = p^k(\bar g(\tilde x)),
\end{align*}
as desired.
\end{proof}

\begin{thm}\label{submaplift}
Let $g:X\to D_k(X)$ be a submap of $f:X\to D_n(X)$, and let $\bar f = (\bar f_1,\dots,\bar f_n)$ be the basic lifting of $f$. Then $g$ has a lifting of the form $\bar g = (\bar f_{i_1},\dots,\bar f_{i_k})$ for some indices $i_j \in \{1,\dots,n\}$. 

Furthermore, if $g$ is irreducible, then $\{\bar f_{i_1},\dots,\bar f_{i_k}\}$ is a $\sigma$-class of $\bar f$. 
\end{thm}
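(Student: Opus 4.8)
The plan is to realize everything through the graph $\Gamma_f$ and its pullback to the universal cover, where the coordinates $\bar f_i$ of the basic lift appear as honest single-valued maps. First I would record the structure of $\Gamma_{f\circ p}\to\tilde X$. Since $\Gamma_f\to X$ is an $n$-fold covering, its pullback $\Gamma_{f\circ p}=\tilde X\times_X\Gamma_f\to\tilde X$ is an $n$-fold covering of the simply connected space $\tilde X$, hence trivial. For each $i$ the assignment $\tilde x\mapsto(\tilde x,p(\bar f_i(\tilde x)))$ is a continuous section of this covering (it lands in $\Gamma_{f\circ p}$ because $p^n\circ\bar f=f\circ p$), and the $n$ sections so obtained are pairwise distinct at every point, since $\bar f(\tilde x)\in F_n(\tilde X,\pi)$ forces $p(\bar f_i(\tilde x))\neq p(\bar f_j(\tilde x))$ for $i\neq j$. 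Thus these $n$ sections are exactly the $n$ sheets, equivalently components, of $\Gamma_{f\circ p}$. Pushing down by the deck action of $\pi_1(X)$ (which acts by $\alpha\cdot(\tilde x,y)=(\alpha\tilde x,y)$ and, by the identity $\bar f_i(\alpha\tilde x)=\phi_i(\alpha)\bar f_{\sigma^{-1}_{\alpha}(i)}(\tilde x)$, carries sheet $i$ to sheet $\sigma_\alpha(i)$), the image $C_i$ of sheet $i$ in $\Gamma_f$ is a single component of $\Gamma_f$, every component of $\Gamma_f$ arises this way, and $C_i=C_j$ exactly when $i\sim j$; moreover the fiber of $C_i$ over $x$ is $\{p(\bar f_j(\tilde x)):j\sim i\}$ for any $\tilde x\in p^{-1}(x)$.

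Next, since $g$ is a submap, $\Gamma_g\subseteq\Gamma_f$, and as $\Gamma_g\to X$ and $\Gamma_f\to X$ are both finite coverings the inclusion is a map of coverings over $X$; hence $\Gamma_g$ is open in $\Gamma_f$, and closed by compactness. So $\Gamma_g$ is a union of components of $\Gamma_f$, say $\Gamma_g=\bigcup_{i\in I}C_i$ with $I=\{i:C_i\subseteq\Gamma_g\}$. This $I$ is automatically $\sim$-saturated: if $C_i\subseteq\Gamma_g$ and $j\sim i$, then $C_j=C_i\subseteq\Gamma_g$. Reading off fibers then gives $g(x)=\{p(\bar f_i(\tilde x)):i\in I\}$ for any $\tilde x\in p^{-1}(x)$, and since the $p(\bar f_i(\tilde x))$ are distinct this forces $|I|=|g(x)|=k$. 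Writing $I=\{i_1,\dots,i_k\}$ and $\bar g=(\bar f_{i_1},\dots,\bar f_{i_k})$, the disjointness of coordinates is inherited from $\bar f$, so $\bar g:\tilde X\to F_k(\tilde X,\pi)$, and the displayed identity is exactly $p^k\circ\bar g=g\circ p$. This proves the first assertion.

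For the second assertion, suppose $g$ is irreducible. By Theorem \ref{graphpartition} the graph $\Gamma_g$ has a single path component; since $\Gamma_g=\bigcup_{i\in I}C_i$ and the distinct $C_i$ with $i\in I$ are precisely its components, all indices in $I$ lie in a single $\sim$-class. Combined with the $\sim$-saturation of $I$ established above, this means $I$ is exactly one $\sim$-class, so $\{\bar f_{i_1},\dots,\bar f_{i_k}\}$ is a $\sigma$-class of $\bar f$, as claimed.

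The routine-but-careful steps are the identification of the sheets of $\Gamma_{f\circ p}\to\tilde X$ with the graphs of the maps $p\circ\bar f_i$ (one must know that the coordinate maps of the basic lift furnish a full set of continuous selections of $f\circ p$) and the standard ``a sub-covering is clopen'' lemma. The one genuinely organizational point, and the crux of the second assertion, is the translation — via the $\pi_1(X)$-action that permutes the sheets by $\sigma$ — between the components of $\Gamma_f$ and the $\sim$-classes of indices; everything else is bookkeeping on fibers.
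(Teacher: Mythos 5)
Your argument is correct, but it follows a genuinely different route from the paper. The paper's proof is local and algebraic: for the first claim it chooses the indices $i_1,\dots,i_k$ so that the tuple $(\bar f_{i_1},\dots,\bar f_{i_k})$ agrees with $g$ over the basepoint and then appeals to uniqueness of lifts (the ``locally constant index'' observation you flag as a routine step is exactly what is hiding behind the paper's ``clearly''); for the second claim it argues the contrapositive, splitting a non-$\sigma$-class index set into $\sigma$-classes and using the preceding theorem (that each $\sigma$-class of $\bar f$ descends to a submap) to produce a proper submap of $g$. You instead work globally with the graph: you trivialize the pullback covering $\Gamma_{f\circ p}\to\tilde X$, identify its sheets with the coordinates $p\circ\bar f_i$, track the deck action (sheet $i$ goes to sheet $\sigma_\alpha(i)$) to match components of $\Gamma_f$ with $\sim$-classes, show $\Gamma_g$ is a clopen union of such components, and then invoke Theorem \ref{graphpartition} to get the second claim directly rather than by contraposition. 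Your approach uses heavier machinery (the covering structure of $\Gamma_f\to X$ and Brown's graph criterion), but it buys two things: it makes explicit that the index set of any submap's lift is automatically $\sim$-saturated — a fact the paper's contrapositive step uses only implicitly when it asserts a ``partition into $\sigma$-classes'' — and it essentially establishes the component/$\sigma$-class correspondence in one pass, so that Theorem \ref{sigmaclasspartition} and the graph-theoretic statements of Section 7 come along nearly for free. The paper's route is shorter and stays entirely inside the lifting formalism of \cite{bdds20}; yours is more structural and self-contained on the saturation point.
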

\begin{proof}
For the first statement, let $x_0\in X$ be the basepoint, and we have $g(x_0) \subset f(x_0)$. Let $\tilde x_0 \in p^{-1}(x_0)$ be the basepoint, and let $i_j$ be chosen so that $p^k(\bar f_{i_1}(\tilde x_0),\dots,\bar f_{i_k}(\tilde x_0)) = g(x_0)$. Then clearly $\bar g = (\bar f_{i_1},\dots,\bar f_{i_k})$ is a lifting of $g$. 

For the second statement, we will prove the contrapositive. Assume that $\{\bar f_{i_1},\dots,\bar f_{i_k}\}$ is not a $\sigma$-class, and we will show that $g$ is reducible. Since $\{\bar f_{i_1},\dots,\bar f_{i_k}\}$ is not a single $\sigma$-class, it has a nontrivial partition into $\sigma$-classes. Without loss of generality assume that $\{\bar f_{i_1},\dots,\bar f_{i_l}\}$ is a $\sigma$-class for some $l<k$. 

Now let $h = p^l\{\bar f_{i_1},\dots,\bar f_{i_l}\}$, and we have $h(x) \subset g(x)$ and $h(x)$ has cardinality $l<k$ for each $x$. Thus $g$ is reducible, and we have proved the contrapositive of the second statement.
\end{proof} 

Combining the two theorems above shows that the partitions of $f$ are exactly determined by the $\sigma$-classes:

\begin{thm}\label{sigmaclasspartition}
There is a bijective correspondence between the set of irreducible submaps of $f$ and the set of $\sigma$-classes of the $\bar f_i$.
\end{thm}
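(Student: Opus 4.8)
The plan is to read off the bijection directly from the two theorems above. For a $\sigma$-class $S=\{\bar f_{i_1},\dots,\bar f_{i_k}\}$, call $\{i_1,\dots,i_k\}$ its \emph{index set}. The first of those two theorems sends each $\sigma$-class $S$ to the submap $\Phi(S)=g_S$ given by $g_S(x)=p^k(\bar f_{i_1}(\tilde x),\dots,\bar f_{i_k}(\tilde x))$ for any $\tilde x\in p^{-1}(x)$, and the second (Theorem~\ref{submaplift}) sends each irreducible submap $g$ of $f$ to the $\sigma$-class $\Psi(g)$ that $g$ lifts to. I will check that $\Phi$ and $\Psi$ are well defined and mutually inverse, so that $\Phi$ is the claimed bijection.

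The bookkeeping rests on a single observation, which I would record first: the $n$ points $p(\bar f_1(\tilde x_0)),\dots,p(\bar f_n(\tilde x_0))$ are precisely the $n$ distinct points of $f(x_0)$. From this it follows that the index set of the lift furnished by the first part of Theorem~\ref{submaplift} must be $\{\,i : p(\bar f_i(\tilde x_0))\in g(x_0)\,\}$, so $\Psi$ is independent of the choices made in that theorem; that a submap $g$ of $f$ is recovered from its index set $I$ by $g(x)=p^{|I|}\big((\bar f_i(\tilde x))_{i\in I}\big)$, so $\Phi$ is injective and $g_S$ has the same index set as $S$; and that a $\sigma$-class is determined by its index set. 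Granting for the moment that each $g_S$ is irreducible, these facts give $\Psi(\Phi(S))=S$ and $\Phi(\Psi(g))=g$ at once.

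It remains to show that $g_S$ is irreducible; this is the one substantive step, and the place where the partition (as opposed to merely covering) structure of the $\sigma$-classes enters. Since $g_S(x)\subseteq f(x)$ by construction, $g_S$ is a submap of $f$. Suppose it were reducible; then it has an irreducible proper submap $h$ --- take one of minimal cardinality, say --- and $h$ is again a submap of $f$. By Theorem~\ref{submaplift}, $h$ lifts to a $\sigma$-class $T$, and by the distinctness above together with $h(x_0)\subsetneq g_S(x_0)$, the index set of $T$ is a nonempty proper subset of the index set of $S$. But distinct $\sigma$-classes are disjoint, so no nonempty $\sigma$-class can be a proper subset of another; this is a contradiction. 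Hence each $g_S$ is irreducible, and $\Phi$ and $\Psi$ are mutually inverse bijections between the $\sigma$-classes of $\bar f$ and the irreducible submaps of $f$. I expect the irreducibility of $g_S$ to be the main obstacle; the rest is just unwinding the definition of a lift together with the distinctness of the branches at the basepoint.
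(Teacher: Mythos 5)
Your proof is correct and follows the route the paper intends: the paper gives no argument beyond ``combining the two theorems above,'' and your $\Phi$/$\Psi$ construction is exactly that combination made explicit, with the well-definedness checks resting, as they should, on the distinctness of the $n$ branch values $p(\bar f_1(\tilde x_0)),\dots,p(\bar f_n(\tilde x_0))$. The one substantive point you add --- that the submap $g_S$ attached to a $\sigma$-class is itself irreducible, proved by lifting a hypothetical proper irreducible submap to a $\sigma$-class properly contained in $S$ and contradicting disjointness of $\sigma$-classes --- is genuinely needed for the bijection and is left implicit in the paper, so supplying it was the right call.
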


%\begin{cor}\label{submapisequivclass}
%Let $g:X\to D_k(X)$ be a submap of $f:X\to D_n(X)$, with $\bar f=(\bar f_1,\dots,\bar f_n)$. Then $g$ lifts to a union of equivalence classes $(\bar f_{i_1},\dots,\bar f_{i_k})$. 
%\end{cor}

The next theorem shows that fixed point classes of $f$ naturally respect partitions of $f$.

\begin{thm}
Let $f$ be an $n$-valued map with a partition $f =\{g,h\}$. Then every fixed point class of $g$ is disjoint from every fixed point class of $h$.
\end{thm}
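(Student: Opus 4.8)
The plan is to reduce the statement to the disjointness criterion for fixed point classes already established in \cite[Theorem 2.9]{bdds20}, namely that $p(\Fix(\alpha \bar f_i))$ and $p(\Fix(\beta \bar f_j))$ are disjoint whenever $[(\alpha,i)] \neq [(\beta,j)]$. So the whole argument comes down to showing that the indices of $\bar f$ coming from $g$ can never be Reidemeister equivalent to the indices coming from $h$. First I would invoke Theorem \ref{submaplift} to lift the partition: choosing a basepoint, write the basic lift $\bar f = (\bar f_1,\dots,\bar f_n)$ and relabel so that $g$ lifts to $\bar g = (\bar f_1,\dots,\bar f_m)$ and $h$ lifts to $\bar h = (\bar f_{m+1},\dots,\bar f_n)$, with $\{1,\dots,m\}$ and $\{m+1,\dots,n\}$ each a union of $\sigma$-classes (by Theorem \ref{sigmaclasspartition}, refining into irreducibles if necessary — but we only need that each of the two index blocks is $\sigma$-invariant). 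A fixed point class of $g$ then has the form $p(\Fix(\alpha \bar f_i))$ with $i \le m$, and a fixed point class of $h$ has the form $p(\Fix(\beta \bar f_j))$ with $j > m$.

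The key observation is that the permutation $\sigma_\gamma$ preserves the block $\{1,\dots,m\}$ for every $\gamma \in \pi_1(X)$: this is precisely the statement that $\{1,\dots,m\}$ is $\sigma$-invariant, which holds because $g$ is a submap whose lift is a union of $\sigma$-classes (as in the proof of the first theorem after Theorem \ref{submaplift}, where it is shown that $\sigma_\alpha$ permutes the index set of any $\sigma$-class among itself). Consequently, if $i \le m < j$, there can be no $\gamma$ with $\sigma_\gamma(j) = i$, since $\sigma_\gamma(j) > m$. Hence $i \not\sim j$, and so $[(\alpha,i)] \neq [(\beta,j)]$ for any $\alpha,\beta$ — the Reidemeister equivalence $\alpha = \gamma\beta\phi_i(\gamma^{-1})$ can only hold when $\sigma_\gamma(j)=i$, which is already impossible. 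Then \cite[Theorem 2.9]{bdds20} gives that $p(\Fix(\alpha\bar f_i))$ and $p(\Fix(\beta\bar f_j))$ are disjoint, which is exactly the claim.

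I expect the main (and only real) obstacle to be bookkeeping about which lifting data to use: the fixed point classes of $g$ are a priori defined using a basic lift of $g$, not of $f$, so I need to check that the lift $\bar g = (\bar f_1,\dots,\bar f_m)$ supplied by Theorem \ref{submaplift} computes the same fixed point classes as a genuine basic lift of $g$, i.e. that $\Fix(\alpha \bar g_i)$ for $\alpha$ ranging over $\pi_1(X)$ produces all fixed point classes of $g$. This should follow from the fact that $\bar g$ is \emph{a} lift of $g$ with respect to $p^m : F_m(\tilde X,\pi) \to D_m(X)$ (being a coordinate projection of a lift of $f$ composed with the appropriate forgetful covering), and that the fixed point classes of $g$ do not depend on the choice of lift among the lifts differing by deck transformations — which is the content of the $\sigma$-class / Reidemeister formalism in \cite{bdds20} applied to $g$. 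Once that identification is in place, the disjointness is immediate from the index-block argument above.
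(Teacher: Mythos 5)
Your proposal is correct and follows essentially the same route as the paper: both arguments realize the fixed point classes of $g$ and $h$ as $p\Fix(\alpha\bar f_i)$ and $p\Fix(\beta\bar f_j)$ with $i$ and $j$ in different $\sigma$-classes (via Theorem \ref{submaplift} and the $\sigma$-class correspondence), conclude that no $\gamma$ satisfies $\sigma_\gamma(j)=i$, hence $[(\alpha,i)]\neq[(\beta,j)]$, and invoke Theorem 2.9 of \cite{bdds20} for disjointness. Your extra care about identifying the lift of $g$ extracted from $\bar f$ with a basic lift of $g$ is a point the paper's terser proof passes over silently, but it is the same argument.
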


\begin{proof}
Since $g$ and $h$ are submaps of $f$, each fixed point class of $g$ has the form $p\Fix(\alpha\bar f_i)$ for some $\alpha\in \pi_1(X)$, and each fixed point class of $h$ has the form $p\Fix(\beta\bar f_j)$ for some $\beta \in \pi_1(X)$. Furthermore, the lifts $\bar f_i$ and $\bar f_j$ belong to different $\sigma$-classes, since $g$ and $h$ do not contain a common irreducible submap. Thus there is no $\gamma$ with $\sigma_\gamma(j)=i$, and thus $[(\alpha,i)] \neq [(\beta,j)]$ and so $p\Fix(\alpha\bar f_i) \cap p\Fix(\beta\bar f_j) = \emptyset$.
\end{proof}

Since the Nielsen number is the number of essential fixed point classes, we immediately obtain:
\begin{cor}\label{Npartition}
If $f$ has a partition $f = \{g,h\}$, then $N(f) = N(g) + N(h)$. 
\end{cor}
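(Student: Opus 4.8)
The plan is to show that the fixed point classes of $f$ are exactly those of $g$ together with those of $h$, via a correspondence that preserves the fixed point index; counting essential classes then gives $N(f)=N(g)+N(h)$.

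First I would record a set-theoretic decomposition. Since $\{g,h\}$ is a partition, $g(x)$ and $h(x)$ are disjoint subsets of $f(x)$ for every $x$, so $x\in f(x)$ holds if and only if $x\in g(x)$ or $x\in h(x)$, and never both; hence $\Fix(f)=\Fix(g)\sqcup\Fix(h)$. Next, by Theorem~\ref{submaplift} the components of the basic lift $\bar f=(\bar f_1,\dots,\bar f_n)$ break up into the sub-collection lifting $g$ and the complementary sub-collection lifting $h$ (the two index sets are disjoint because $g(x_0)\cap h(x_0)=\emptyset$ at the basepoint, and they exhaust $\{1,\dots,n\}$ since their sizes sum to $n$), and each of the two index sets is a union of $\sigma$-classes, hence invariant under every $\sigma_\alpha$. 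Because of this invariance, the functions $\phi_i$ and the permutations $\sigma_\alpha$ attached to $g$ are the restrictions of those attached to $f$, so the Reidemeister equivalence of $f$, restricted to pairs $(\alpha,i)$ whose index $i$ belongs to $g$, agrees with the Reidemeister equivalence of $g$; the same holds for $h$, and no Reidemeister relation of $f$ links a $g$-index to an $h$-index. It follows that every fixed point class $p\Fix(\alpha\bar f_i)$ of $f$ is a fixed point class of $g$ or of $h$, that conversely the fixed point classes of $g$ and of $h$ are fixed point classes of $f$, and that by the preceding theorem these two families are disjoint. This produces a bijection between the fixed point classes of $f$ and the disjoint union of the fixed point classes of $g$ with those of $h$.

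It remains to see that this bijection preserves the fixed point index, for then it carries essential classes to essential classes and the corollary follows by counting. So let $F$ be a fixed point class of $f$ contained in $\Fix(g)$, and pick an open neighborhood $U$ of $F$ meeting no other fixed point class of $f$, so that $\Fix(f)\cap U=F$. Cover $U$ by small simply connected open sets $V$; over each $V$ both $f$ and $g$ split, and since $g(x)\subseteq f(x)$ and $V$ is connected, the branches of $g|_V$ form a fixed sub-collection of the branches of $f|_V$, the remaining branches being precisely those of $h|_V$. Since $F\subseteq\Fix(g)$ and $g(y)\cap h(y)=\emptyset$ for all $y$, no point of $U$ is fixed by any branch of $h|_V$, so in the local sum that defines the index of $f$ on $F\cap V$ only the $g$-branches contribute; summing over the cover gives $\ind(f,F)=\ind(g,F)$, and symmetrically $\ind(f,F)=\ind(h,F)$ when $F\subseteq\Fix(h)$. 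The main obstacle is exactly this last step: it uses the standard fact that the fixed point index of an $n$-valued map is computed locally through splittings and is additive over isolated subsets of the fixed point set, together with the observation that this local description is inherited by submaps. I would quote these properties from the construction of the index for $n$-valued maps (Schirmer, and \cite{bdds20}); everything else — the set decomposition, the compatibility of the Reidemeister data, and the final count — is then routine.
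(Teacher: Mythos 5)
Your proposal is correct and follows essentially the same route as the paper: the paper deduces the corollary immediately from the preceding disjointness theorem together with the identification of fixed point classes of $g$ and $h$ (via their lifts $\bar f_{i_j}$) as fixed point classes of $f$. You have simply made explicit the details the paper leaves implicit --- the decomposition of the index set of the basic lift using Theorem~\ref{submaplift}, the compatibility of the Reidemeister relations, and the local-splitting argument that the index of a class of $g$ agrees with its index as a class of $f$ --- all of which are sound.
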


\begin{cor}
If $f = \{ f_1,\dots,f_k\}$ is the partition into irreducibles, then 
\begin{equation}\label{Nparteq}
 N(f) = N(f_1) + \dots + N(f_k). 
 \end{equation}
\end{cor}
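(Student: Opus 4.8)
The plan is to reduce to the two-part case already settled in Corollary \ref{Npartition} and then induct on $k$. The base case $k=1$ is immediate, since then $f=f_1$ and there is nothing to prove. For the inductive step, suppose $f=\{f_1,\dots,f_k\}$ with $k\ge 2$ is the partition into irreducibles, so $f_i$ is $k_i$-valued with $k_1+\dots+k_k=n$. First I would observe that $h:=f_2\cup\dots\cup f_k$ is a well-defined $(n-k_1)$-valued map: the sets $f_i(x)$ are pairwise disjoint for each $x$ because $\{f_1,\dots,f_k\}$ is a partition of $f$, so the union has the correct cardinality $n-k_1$ at every point and depends continuously on $x$. Then $f=\{f_1,h\}$ is a partition of $f$ into two parts, and Corollary \ref{Npartition} gives $N(f)=N(f_1)+N(h)$.

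Next I would identify the partition of $h$ into irreducibles. Irreducibility of a map does not depend on any ambient map containing it, so each $f_i$ (for $i\ge 2$) is still irreducible as a submap of $h$, and $h=\{f_2,\dots,f_k\}$ is a partition of $h$ into irreducibles; by the uniqueness statement (the Corollary following Theorem \ref{graphpartition}) this is \emph{the} irreducible partition of $h$, and it has $k-1$ parts. By the inductive hypothesis applied to $h$ we get $N(h)=N(f_2)+\dots+N(f_k)$. Combining with the previous display yields $N(f)=N(f_1)+N(f_2)+\dots+N(f_k)$, which is \eqref{Nparteq}.

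There is essentially no hard step here; the only points requiring a little care are the bookkeeping ones: checking that the union $f_2\cup\dots\cup f_k$ is genuinely a single $(n-k_1)$-valued map (disjointness of the pieces), and invoking uniqueness of the irreducible partition so that the inductive hypothesis may legitimately be applied to $h$. Neither of these presents any real obstacle, so the corollary follows formally from Corollary \ref{Npartition} by induction on the number of parts.
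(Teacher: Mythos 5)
Your argument is correct and is essentially the argument the paper intends: the paper states this as an immediate consequence of Corollary \ref{Npartition}, and your induction (grouping $f_2\cup\dots\cup f_k$ into a single submap and iterating the two-part additivity) is just the careful write-up of that, with the disjointness/continuity and uniqueness-of-irreducible-partition checks being exactly the right bookkeeping points.
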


%\begin{cor}
%If $f$ has a partition $f = \{g,h\}$, then $R(f) = R(g) + R(h)$. 
%\end{cor}

%\begin{cor}
%If $f = \{ f_1,\dots,f_k\}$ is a partition into irreducibles, then $R(f) = R(f_1) + \dots + R(f_n)$.
%\end{cor}

\section{Circle maps and linear maps on tori}\label{torisection}
The $n$-valued Nielsen theory on tori has been studied in detail for the class of \emph{linear $n$-valued maps}, introduced in \cite{bl10}. This class of maps includes all $n$-valued maps on the circle (up to homotopy). 

Denote the 
universal covering space of the torus $T^q$ by $p^q \colon 
\R^q \to T^q$ where $p(t) = t \mod 1$.  A $q \times q$
integer matrix $A$ induces a map $f_A \colon \R^q/\Z^q = T^q \to T^q$ by
$$
f_A(p^q(\vv)) = p^q(A\vv) = (p(A_1 \cdot \vv), \dots , p(A_q \cdot \vv)),
$$
where $A_j$ is the $j$-th row of $A$. 

Given $\vx = (x_1, \dots , x_q), \vy = (y_1, \dots 
, y_q) \in \mathbb R^q$, we say that $\vx = \vy \mod n$ when $x_j = y_j \mod n$ for each $j$. Let $\vc$ be the vector whose coordinates all equal $1$.
Define $f^{k}_{n, A} 
\colon \mathbb R^q \to \mathbb R^q$ by
$$
f^{k}_{n, A}(t) = \frac 1n(At + k\vc).
$$

\begin{thm} (\cite{bl10}, Theorem 3.1, Theorem 4.2) A $q \times q$
integer matrix $A$ induces an $n$-valued map
$f_{n, A} \colon T^q \to D_n(T^q)$ defined by
$$
f_{n, A}(p^q(t)) = p^q\{ f^{1}_{n, A}(t), \dots , 
f^{n}_{n, A}(t)\}
$$
if and only if $A_i = A_j \mod n$ for all
$i, j \in \{1, \dots , q\}$.

For this map, we have $N(f_{n,A}) = n|\det(I-\frac1n A)|$, where $I$ is the identity matrix.
\end{thm}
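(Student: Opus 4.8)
The plan is to treat the equivalence and the Nielsen number formula separately, using for the latter the Reidemeister-class framework of Section~\ref{reidsection}. For the equivalence: since $(p^q)^{-1}(p^q(t)) = t+\Z^q$, the displayed formula descends to a well-defined function on $T^q$ exactly when, modulo $\Z^q$, the unordered set $\{f^1_{n,A}(t),\dots,f^n_{n,A}(t)\}$ is unchanged upon replacing $t$ by $t+m$ for every $m\in\Z^q$. Because $f^k_{n,A}(t+m) = f^k_{n,A}(t)+\frac1n Am$, this set is merely translated by $\frac1n Am$, so the condition is: for every $m$ there is a permutation $\tau$ of $\{1,\dots,n\}$ with $(k-\tau(k))\vc + Am \equiv 0 \pmod n$ for all $k$. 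As the second summand does not involve $k$, this forces $\tau(k)-k$ to be a constant $c_m$ and $Am\equiv c_m\vc\pmod n$, i.e.\ all coordinates of $Am$ must agree mod $n$. Letting $m$ range over the standard basis vectors turns this into precisely $A_i\equiv A_j\pmod n$ for all $i,j$; conversely that congruence yields $Am\equiv (A_1\cdot m)\vc\pmod n$, so cyclically shifting $\{1,\dots,n\}$ by $A_1\cdot m$ positions supplies the required $\tau$. I would finish this half by noting that the $n$ points $f^k_{n,A}(t)$ are pairwise distinct in $T^q$ (their differences $\frac1n(k-k')\vc$ avoid $\Z^q$ for $1\le k\ne k'\le n$) and that continuity is automatic because the $f^k_{n,A}$ are affine, so that $f_{n,A}$ is a genuine $n$-valued map.

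For the Nielsen number I would use the universal cover $p^q\colon\R^q\to T^q$ with basic lift $\bar f=(f^1_{n,A},\dots,f^n_{n,A})$; the defining diagram commutes, and distinctness was just checked. Handle first the degenerate case $\det(I-\frac1n A)=0$ by a routine argument: each lift $\alpha\bar f_k$ is then an affine selfmap of $\R^q$ with $I-\frac1n A$ singular, so its fixed-point set is empty or a positive-dimensional (rational) affine subspace, every fixed point class of $f_{n,A}$ is inessential, and $N(f_{n,A})=0 = n|\det(I-\frac1n A)|$. Assume now $\det(I-\frac1n A)\ne0$ (equivalently $nI-A$ is invertible). For $\alpha\in\Z^q$ the lift is the affine map $(\alpha\bar f_k)(t) = \frac1n(At+k\vc)+\alpha$, whose unique fixed point is $t_{\alpha,k} = (nI-A)^{-1}(k\vc+n\alpha)$ and whose fixed-point index is $\operatorname{sign}\det(I-\frac1n A)=\pm1$. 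Hence every fixed point class $p^q\Fix(\alpha\bar f_k)$ is a single point and is essential, so $N(f_{n,A})$ equals the number of distinct such points, which by \cite[Theorem 2.9]{bdds20} is also the number of Reidemeister classes.

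It remains to count. Set $M=nI-A$. From $p^q(t_{\alpha,k})=p^q(t_{\beta,j})$ iff $(k-j)\vc+n(\alpha-\beta)\in M\Z^q$, the number of classes is the number of cosets of $M\Z^q$ met by $L:=\Z\vc+n\Z^q$, the set swept out by the vectors $k\vc+n\alpha$; that is, $[L:L\cap M\Z^q]$. The hypothesis now re-enters: since all rows of $A$, hence of $M=nI-A$, agree modulo $n$, every $Mv$ has all coordinates equal mod $n$ and therefore lies in $L$, so $M\Z^q\subseteq L$ and the count collapses to $[L:M\Z^q] = [\Z^q:M\Z^q]/[\Z^q:L] = |\det(nI-A)|/n^{q-1}$, using $[\Z^q:M\Z^q]=|\det M|$ and, since $n\Z^q\subseteq L\subseteq\Z^q$ with $L/n\Z^q$ cyclic of order $n$, $[\Z^q:L]=n^{q-1}$. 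Finally $\det(nI-A)=n^q\det(I-\frac1n A)$, so $N(f_{n,A}) = n\,|\det(I-\frac1n A)|$.

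I expect there to be no deep obstacle, but the step that most needs care is the bridge between the geometry and the Nielsen number: verifying via \cite[Theorem 2.9]{bdds20} that the naive count of distinct fixed points $p^q(t_{\alpha,k})$ really equals the number of fixed point classes (neither over- nor under-counting), that each such class is essential because the $n$-valued fixed-point index there localizes to the $\pm1$ Lefschetz index of the affine branch through it, and that the degenerate case contributes $0$. The remaining work — the permutation bookkeeping for well-definedness, the identity $M\Z^q\subseteq L$, and the lattice index arithmetic — is standard linear algebra and covering-space theory.
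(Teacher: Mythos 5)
Your proposal is correct in substance, but note that the paper offers no proof of this statement to compare against: it is quoted directly from Brown and Lin \cite{bl10} (their Theorems 3.1 and 4.2). What you have written is therefore an independent derivation, and it follows a genuinely different route from the cited source. Brown and Lin obtain the Nielsen number by converting the fixed point problem into a coincidence problem for the two projections of the graph onto $T^q$, whereas you work inside the lifting-class framework of \cite{bdds20} that the paper itself develops in Section \ref{reidsection}: you identify $(f^1_{n,A},\dots,f^n_{n,A})$ as the basic lift, note that each $\alpha\bar f_k$ has a unique fixed point $(nI-A)^{-1}(k\vc+n\alpha)$ of index $\operatorname{sign}\det(I-\tfrac1nA)$, and reduce the count of essential classes to the lattice index $[L:M\Z^q]$ with $L=\Z\vc+n\Z^q$ and $M=nI-A$. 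The key point that the row congruence forces $M\Z^q\subseteq L$, so the count collapses to $|\det M|/n^{q-1}=n|\det(I-\tfrac1nA)|$, is correct, as is the well-definedness analysis (all coordinates of $Am$ congruent mod $n$, with the matching permutation necessarily a cyclic shift). This approach has the advantage of meshing with the rest of the paper: your computation of the deck-transformation permutations is essentially the same one used in the proof of Theorem \ref{linearpartition}.

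The one step you assert rather than prove is the degenerate case $\det(I-\tfrac1nA)=0$. Knowing that $\Fix(\alpha\bar f_k)$ is empty or a positive-dimensional rational affine subspace does not by itself yield index zero for the corresponding class; one still needs the standard torus argument (each nonempty class projects to a coset of a subtorus and its index vanishes with $\det(I-\tfrac1nA)$, or a perturbation/homotopy argument as in the single-valued affine case). This is routine and you flag it as such, but it is the only point in the write-up that is not actually argued. Everything else --- the pairwise distinctness of the $n$ branches, the identification of fixed point classes with singletons via \cite[Theorem 2.9]{bdds20} (note that at most one branch passes through each fixed point, so the local index really is $\pm1$), and the index arithmetic $[\Z^q:L]=n^{q-1}$ --- checks out.
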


In the case $q=1$, our space is $T^1=S^1$, the circle. The theory of $n$-valued maps on the circle was extensively studied by Brown in \cite{brow06}.
Brown showed that any $n$-valued map $f:S^1 \to D_n(S^1)$ is homotopic to some linear map. The $q\times q$-matrix $A$ for this linear map is a single integer, called the \emph{degree} of $f$, and the formula for the Nielsen number is simply $N(f) = |n-d|$. 

Brown also showed that $f_{n,d}:S^1 \to D_n(S^1)$, the linear circle map of degree $d$, is split if and only if $n\mid d$. In that case, $f$ splits into $n$ maps, each homotopic to the map of degree $d/n$. We generalize this result to partitions of linear torus maps as follows:
\begin{thm}\label{linearpartition}
Let $f_{n,A}:T^q \to D_n(T^q)$ be a linear torus map induced by some $q\times q$-matrix $A$, such that, for each row $A_j$, we have $A_j = [l_1,\dots, l_q] \mod n$. Then $f_{n,A}$ has a nontrivial partition if and only if there is some $m\in \Z$ with $m> 1$ which is a common factor of $n$ and every $l_j$. 

In this case, $f_{n,A}$ partitions into a a set of $m$ maps, each homotopic to the linear $\frac nm$-valued map $f_{\frac nm, \frac{1}nA}$. 
\end{thm}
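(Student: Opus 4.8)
The plan is to compute, for the linear map $f=f_{n,A}$, the homomorphism $\sigma\colon\pi_1(T^q)\to\Sigma_n$ attached to its basic lift, to read off the $\sigma$-classes explicitly, and then to apply Theorem~\ref{sigmaclasspartition}. Identify $\pi_1(T^q)=\Z^q$ acting on $\R^q$ by translations. The tuple $\bar f(t)=(f^1_{n,A}(t),\dots,f^n_{n,A}(t))$ lands in $F_n(\R^q,\Z^q)$ — two branches differ by $\tfrac{k-k'}{n}\vc$, which lies in $\Z^q$ only when $k=k'$ for $1\le k,k'\le n$ — and it satisfies $p^n\circ\bar f=f\circ p^q$, so after fixing basepoints it is the basic lift of $f$.

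Next comes the main computation. For $\alpha\in\Z^q$, coordinatewise $f^i_{n,A}(t+\alpha)=f^i_{n,A}(t)+\tfrac1n A\alpha$. Since every row of $A$ is congruent to $[l_1,\dots,l_q]$ mod $n$, we may write $A\alpha=(l\cdot\alpha)\vc+n\vv$ for some $\vv\in\Z^q$, where $l\cdot\alpha=\sum_j l_j\alpha_j$; hence
\[ f^i_{n,A}(t+\alpha)=\vv+f^{\,i-(l\cdot\alpha)}_{n,A}(t), \]
the upper index read modulo $n$. Comparing with $\bar f_i(\alpha\tilde x)=\phi_i(\alpha)\,\bar f_{\sigma_\alpha^{-1}(i)}(\tilde x)$ identifies $\sigma_\alpha$ as the cyclic shift $i\mapsto i-(l\cdot\alpha)$ of $\{1,\dots,n\}\cong\Z/n$ (the sign is immaterial below). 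As $\alpha$ ranges over $\Z^q$ the residues $l\cdot\alpha$ range over the subgroup of $\Z/n$ generated by $l_1,\dots,l_q$, namely $d\,\Z/n$ with $d=\gcd(n,l_1,\dots,l_q)$; so the image of $\sigma$ is the cyclic group of shifts by multiples of $d$, and the $\sigma$-classes are precisely the $d$ cosets of $d\,\Z/n$ in $\Z/n$, each of cardinality $n/d$.

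By Theorem~\ref{sigmaclasspartition} the irreducible partition of $f$ thus consists of exactly $d$ submaps, so $f$ admits a nontrivial partition if and only if $d>1$, i.e.\ if and only if some integer $m>1$ divides $n$ and every $l_j$ (a common factor of $n$ and every $l_j$ is the same thing as a divisor of $d$). For the final sentence, given such an $m$ I would group the $d$ $\sigma$-classes into $m$ blocks of $d/m$ classes each, each block being one coset of $m\,\Z/n$ in $\Z/n$. Every block is invariant under each $\sigma_\alpha$ (because $l\cdot\alpha\in d\,\Z/n\subseteq m\,\Z/n$), and the argument that builds a submap from a single $\sigma$-class goes through verbatim for a $\sigma$-invariant union of classes, producing $(n/m)$-valued submaps $g_1,\dots,g_m$ which partition $f$. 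Finally, $g_r$ lifts to the tuple of branches $\tfrac1n(At+j\vc)$ with $j$ running over a coset $i_0+m\,\Z/n$; translating away the constant $\tfrac{i_0}{n}\vc$ (a translation in $T^q$, hence a homotopy, since $T^q$ is path connected) turns these into the branches $\tfrac1n(At+mj'\vc)$, $j'=1,\dots,n/m$, of the linear $\tfrac nm$-valued map induced by $\tfrac1m A$ — an integer matrix, as $m$ divides every entry of $A$ (each $A_{ji}\equiv l_i\equiv0\bmod m$). Hence each $g_r$ is homotopic to $f_{\frac nm,\,\frac1m A}$.

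The step I expect to be the real work is the computation of $\sigma_\alpha$: one must handle the wreath-product deck group $(\Z^q)^n\rtimes\Sigma_n$ of $F_n(\R^q,\Z^q)$ correctly — peeling off the permutation part $\sigma_\alpha$ and the translation parts $\phi_i(\alpha)$ at the same time — and keep the index arithmetic modulo $n$ consistent with the sign conventions of Section~\ref{reidsection}. The only other point that needs care is the (routine) remark that the construction of a submap from a $\sigma$-class extends to any union of $\sigma$-classes; alternatively one can bypass the $\sigma$-machinery entirely and run the same congruence computation directly on the path components of $\Gamma_f$ via Theorem~\ref{graphpartition}.
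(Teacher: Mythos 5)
Your proof is correct and follows essentially the same route as the paper: compute the permutation part $\sigma$ of the basic lift of $f_{n,A}$ (the paper does this only on the basis vectors $\ve_j$, obtaining shifts by $l_j$, which suffices since $\sigma$ is a homomorphism), identify the $\sigma$-orbits as cosets of the subgroup generated by the $l_j$ in $\Z/n$, invoke the $\sigma$-class/partition correspondence of Theorem \ref{sigmaclasspartition}, and finally translate away the constant $\frac{r}{n}\vc$ to recognize each block of branches as a copy of the linear $\frac{n}{m}$-valued map induced by $\frac{1}{m}A$. Your sign slip in the shift $i\mapsto i-(l\cdot\alpha)$ is, as you say, immaterial, and your extra care in grouping the $\sigma$-classes into cosets of $m\Z/n$ when $m$ is a proper divisor of $\gcd(n,l_1,\dots,l_q)$ (noting that the submap construction works for any $\sigma$-invariant union of classes) actually tightens a point the paper passes over quickly.
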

\begin{proof}
We will compute the $\sigma$-orbits of $f_{n,A}$. Let $\{\ve_1,\dots,\ve_q\}$ be the standard basis vectors for $\R^q$. Since $\sigma:\pi_1(T^q) \to \Sigma_n$ is a homomorphism, it suffices to compute $\sigma_{\ve_j}$ for each $j$. Since every row $A_j$ of $A$ has the form $A_j = [l_1,\dots,l_q] \mod n$, every column $A\ve_j$ has every entry equal to $l_j \mod n$, that is, $A\ve_j = l_j\vc \mod n$.

We have:
\begin{align*}
f^k_{n,A}(\vv + \ve_j) &= \frac{1}{n}(A(\vv+\ve_j)+k\vc) = \frac1n(A \vv + A\ve_j + k\vc) \\
&= \frac1n(A\vv + (k+l_j)\vc) \mod 1 = f^{k+l_j}_{n,A}(\vv) \mod 1
\end{align*}
and we see that $\sigma_{\ve_j} (k)= k + l_j$. Thus the $\sigma$-class of $k$ is all of $\{1,\dots,n\}$ if and only if some $l_j$ has no nontrivial common factor with $n$, and we have proved the first statement of the theorem.

For the second statement, assume that $m>1$ is some nontrivial common factor of $n$ and every $l_j$. Then the $\sigma$-orbits in $\{1,\dots,n\}$ are simply the conjugacy classes modulo $n/m$. Thus there are $n/m$ different $\sigma$-orbits, each consisting of $m$ elements. Then $f_{n,A}$ partitions into $m$ maps, each being $n/m$-valued. Given some $k \in \{1,\dots,n\}$, divide $k$ by $m$ to obtain a quotient and remainder $k = sm + r$ with $s\le k$ and $0 \le r < m$. Then we have
\begin{align*} 
f^k_{n,A}(\vv) &= \frac1n (A\vv + k\vc) = \frac1n (A\vv + sm\vc) + \frac 1n (r\vc) 
= \frac mn (\frac 1m A\vv + \frac 1m (sm)\vc) + \frac rn \vc \\
&= \frac mn (\frac 1m A\vv + s\vc) + \frac rn\vc 
= f^s_{\frac mn,\frac 1m A}(\vv) + \frac rn\vc.
\end{align*}
Thus when $k_1,\dots,k_{n/m}$ each have the same remainder, i.e. $k_i = (i-1)m+r$, then the set $\{f^{k_1}_{n,A},\dots,f^{k_{n/m}}_{n,A}\}$ differs from the linear $n/m$-valued map $f_{\frac mn,\frac 1m A}$ by the constant $\frac rn \vc$, and thus they are homotopic, proving the second statement.
\end{proof}

Letting $m=n$ in Theorem \ref{linearpartition} gives the following result, which was proved directly by Brown \& Lin in \cite{bl10}:
\begin{cor}
With notation as in Theorem \ref{linearpartition}, the linear map $f_{n,A}:T^q \to D_n(T^q)$ splits if and only if $n$ divides every $l_j$, and in this case $f_{n,A}$ splits into $n$ maps, each homotopic to $f_{1,\frac 1n A}$. 
\end{cor}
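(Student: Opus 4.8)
The plan is to read this off directly from Theorem~\ref{linearpartition} specialized to $m=n$, together with its proof. The first point is purely definitional: a partition of the integer $n$ into $n$ positive summands must consist entirely of $1$'s, so a partition of $f_{n,A}$ into $n$ maps is precisely a splitting. Hence the corollary is exactly the assertion that the (unique) partition of $f_{n,A}$ into irreducibles has $n$ parts if and only if $n\mid l_j$ for every $j$, together with an identification of those $n$ single-valued factors up to homotopy. One may assume $n\ge 2$, since for $n=1$ the map is single-valued and every claim is trivial.

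For the equivalence, I would recall from the proof of Theorem~\ref{linearpartition} the computation $\sigma_{\ve_j}(k)=k+l_j \bmod n$, so that the $\sigma$-classes of the basic lift of $f_{n,A}$ are exactly the cosets of the subgroup $\langle l_1,\dots,l_q\rangle\le\Z/n\Z$. By Theorem~\ref{sigmaclasspartition} together with the uniqueness of the irreducible partition, $f_{n,A}$ splits if and only if there are $n$ distinct $\sigma$-classes, which happens if and only if $\langle l_1,\dots,l_q\rangle$ is trivial, i.e. $n\mid l_j$ for all $j$. For the ``in this case'' clause I would first note that when $n\mid l_j$ for all $j$, every entry of $A$ is divisible by $n$ (the $j$-th column of $A$ has all entries congruent to $l_j\equiv 0 \bmod n$), so $\tfrac1n A$ is a genuine integer matrix and $f_{1,\frac1n A}$ is well defined. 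Then applying Theorem~\ref{linearpartition} with $m=n$ — which is a common factor of $n$ and every $l_j$ — yields a partition of $f_{n,A}$ into $n$ maps, each homotopic to the linear $\tfrac nn = 1$-valued map $f_{1,\frac1n A}$; concretely, each $\sigma$-class is a singleton $\{k\}$ and its single-valued factor is $\vv\mapsto \tfrac1n A\vv+\tfrac kn\vc$, which differs from $f_{1,\frac1n A}$ by the constant $\tfrac kn\vc$ and so is homotopic to it.

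I do not expect a genuine obstacle here; the proof is a bookkeeping exercise on top of Theorems~\ref{linearpartition} and~\ref{sigmaclasspartition}. The only points that need care are: observing that a partition into $n$ parts is the same thing as a splitting; verifying that $\tfrac1n A$ is integral so that the target $f_{1,\frac1n A}$ makes sense; and invoking the uniqueness of the irreducible partition in order to pass from ``$f_{n,A}$ splits'' to ``$f_{n,A}$ has exactly $n$ $\sigma$-classes.''
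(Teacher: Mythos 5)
Your proposal is correct and follows the paper's route: the paper simply obtains the corollary by setting $m=n$ in Theorem \ref{linearpartition}, which is exactly your specialization. Your extra bookkeeping is sound and even slightly sharper than what the paper records — in particular, using the computation $\sigma_{\ve_j}(k)=k+l_j$ and Theorem \ref{sigmaclasspartition} to get the ``only if'' direction (splits $\Rightarrow$ $n\mid l_j$ for all $j$) is a worthwhile step, since the bare statement of Theorem \ref{linearpartition} only yields the existence of \emph{some} common factor $m>1$, and checking that $\tfrac1n A$ is integral is a detail the paper leaves implicit.
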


For the special case of circle maps (where $q=1$), Theorem \ref{linearpartition} gives:
\begin{cor}
Let $f:S^1 \to D_n(S^1)$ be a circle map of degree $d$. Then $f$ has a nontrivial partition if and only if $n$ and $d$ have a common factor $m>1$. In this case, $f$ partitions into $m$ maps, each an $n/m$-valued map of degree $d/m$.
\end{cor}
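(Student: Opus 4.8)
The plan is to obtain this corollary as the special case $q=1$ of Theorem~\ref{linearpartition}, the only extra ingredient being Brown's homotopy classification of $n$-valued circle maps. First I would set up the dictionary between the two statements. For $q=1$ a ``$1\times1$ integer matrix'' is just an integer $d$, and the compatibility condition $A_i=A_j \bmod n$ of the Brown--Lin theorem is vacuous, so every integer $d$ induces a linear $n$-valued circle map $f_{n,d}=f_{n,[d]}$, which by definition is the linear circle map of degree $d$. In the notation of Theorem~\ref{linearpartition} we have $q=1$ and the single residue is $l_1=d\bmod n$.

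Applying Theorem~\ref{linearpartition} directly then gives the corollary for the linear model: $f_{n,d}$ has a nontrivial partition if and only if $n$ and $d$ have a common factor $m>1$, and in that case $f_{n,d}$ partitions into $m$ maps each homotopic to $f_{n/m,\frac1m A}$ with $A=[d]$, i.e.\ to $f_{n/m,\,d/m}$, the linear $(n/m)$-valued circle map of degree $d/m$. The remaining work is to pass from $f_{n,d}$ to an arbitrary map $f$ of degree $d$. By Brown's theorem $f$ is homotopic to $f_{n,d}$, so it suffices to check that the assertion is homotopy-invariant. I would argue this through the $\sigma$-classes: by Theorem~\ref{sigmaclasspartition} the irreducible submaps of a selfmap correspond bijectively to the $\sigma$-classes of its basic lift, and the homomorphism $\psi_f$ (hence each permutation $\sigma_\alpha$) depends on the homotopy class of $f$ only up to conjugation in $\pi_1(X)^n\rtimes\Sigma_n$; such a conjugation permutes the $n$ indices and therefore preserves the partition of $\{1,\dots,n\}$ into $\sigma$-classes up to relabelling. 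Consequently the number of pieces in the irreducible partition, and the valued-ness of each piece, are homotopy invariants, and the existence of a nontrivial partition transports from $f_{n,d}$ to $f$; for the degrees one notes that each irreducible piece is again an $(n/m)$-valued circle map whose homotopy class is pinned down by the same invariance, matching the value $d/m$ computed for $f_{n,d}$.

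The main obstacle is precisely this homotopy-invariance step, and in particular making sure that ``$f$ partitions into $m$ maps of degree $d/m$'' transports along the homotopy for \emph{every} common factor $m>1$, not merely for $m=\gcd(n,d)$. I expect this to follow either from tracking the $\sigma$-class decomposition through the homotopy as above, or equivalently (via Theorem~\ref{graphpartition}) from the fact that the path-component structure of the graph $\Gamma_{f_t}$ is locally constant in $t$ for a homotopy $f_t$; but this is the one point that warrants an explicit argument rather than a bare appeal to Theorem~\ref{linearpartition}.
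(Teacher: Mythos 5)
Your proposal is correct and follows the paper's own route: the corollary is obtained exactly as you describe, by setting $q=1$ in Theorem~\ref{linearpartition} (so $A=[d]$ and $l_1=d\bmod n$), with Brown's classification identifying an arbitrary degree-$d$ circle map with the linear map $f_{n,d}$ up to homotopy. The homotopy-invariance of the partition structure that you flag (arguable either through the $\sigma$-classes via $\psi_f$, or through the $n$-fold covering $\Gamma_{f_t}\to S^1\times I$) is left implicit in the paper, so your extra care there is a refinement of the same argument rather than a different approach.
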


No specific example of $n$-valued maps on tori has been presented in the literature other than linear maps. The following example is a torus map which has linear partitions, but is not itself linear. The example is specific, but not very remarkable- it is easy to construct similar examples.

\begin{exl}
Let:
\[ A = \begin{bmatrix} 1 & 0 \\ 3 & 4 \end{bmatrix}, \quad B = \begin{bmatrix} -1 & 0 \\ 1 & 4 \end{bmatrix}, \]
and let $g(t)$ be the linear 2-valued map $g(t) = f_{2,A}(t)$ and $h(t)$ be the following translation of a linear 2-valued map:
\[ h(t) = f_{2,B}(t) + \begin{bmatrix}1/4 \\ 0 \end{bmatrix} \]
where the entries of $h(t)$ are read mod 1.

Then $g$ and $h$ are each 2-valued maps $g,h:T^2 \to D_2(T^2)$, and we claim that $f = \{g,h\}$ is a 4-valued map $f:T^2 \to D_4(T^2)$.

We must show that $f(x)$ is always a set of 4 elements. Clearly $f$ lifts to $\bar f = ( \bar g_1, \bar g_2, \bar h_1,\bar h_2)$, where
\begin{align*} 
\bar g_1(t) &= f_{2,A}^{1}(t), \quad &\bar g_2(t) = f_{2,A}^{2}(t), \\ 
\bar h_1(t) &= f_{2,B}^{1}(t) + \begin{bmatrix}1/4 \\ 0 \end{bmatrix}, \quad &\bar h_2(t) = f_{2,B}^{2}(t) +\begin{bmatrix}1/4 \\ 0 \end{bmatrix}
\end{align*}
and so it suffices to show that $p^4(\bar f(t))$ is always a set of 4 distinct elements, that is, that the 4 elements: $\bar g_1(t),\bar g_2(t),\bar h_1(t),\bar h_2(t)$ never differ by integer vectors.

We already know that this is true for $\bar g_1$ and $\bar g_2$, and for $\bar h_1$ and $\bar h_2$. We must check the others. For $t = (t_1,t_2)$, we have:
\begin{align*}
\bar g_1(t) - \bar h_1(t) &= \frac12 \left(At + \begin{bmatrix}1\\1\end{bmatrix}\right) - \left(\frac12 \left(Bt +  \begin{bmatrix}1\\1\end{bmatrix}\right) + \begin{bmatrix}1/4 \\ 0 \end{bmatrix}\right) \\
&= \frac12(A-B)t - \begin{bmatrix}1/4 \\ 0\end{bmatrix} = \begin{bmatrix} 1 & 0 \\ 1 & 0\end{bmatrix} t - \begin{bmatrix}1/4 \\ 0\end{bmatrix} = \begin{bmatrix} t_1 - 1/4 \\ t_1 \end{bmatrix},
\end{align*}
and this vector cannot be integer valued. Similar computations show
\begin{align*}
\bar g_1(t) - \bar h_2(t) &= \begin{bmatrix} 1 & 0 \\ 1 & 0\end{bmatrix} t - \begin{bmatrix}-1/4 \\ -1/2\end{bmatrix} = \begin{bmatrix} t_1 + 1/4 \\ t_1+1/2 \end{bmatrix} \\
\bar g_2(t) - \bar h_1(t) &=  \begin{bmatrix} 1 & 0 \\ 1 & 0\end{bmatrix} t - \begin{bmatrix}1/4 \\ 1/2\end{bmatrix} = \begin{bmatrix} t_1 - 1/4 \\ t_1-1/2 \end{bmatrix} \\
\bar g_2(t) - \bar h_2(t) &=  \begin{bmatrix} 1 & 0 \\ 1 & 0\end{bmatrix} t - \begin{bmatrix}1/4 \\ 0 \end{bmatrix} = \begin{bmatrix} t_1 - 1/4 \\ t_1\end{bmatrix} 
\end{align*}
and none of these vectors can be integer valued.

Thus $f = \{g,h\}$ is a 4-valued map. In \cite{bl10} it is shown that $N(f_{n,A}) = n |\det(I-\frac1n A)|$, where $I$ is the identity matrix. Then by Corollary \ref{Npartition} we have $N(f) = N(g) + N(h)$, and since $h$ is homotopic to the linear map $f_{2,B}$, we have:
\[ N(f) = N(f_{2,A}) + N(f_{2,B}) = 2|\det(I-\frac12 A)| + 2|\det(I-\frac12 B)| = 1 + 3 = 4. \]
\end{exl}

In the example above, $f$ is nonsplit and nonlinear, but it does partition into submaps which are homotopic to linear maps. It is not clear if all $n$-valued torus maps can be partitioned in this way.

\begin{question}\label{toriquest}
Given any $f:T^q \to D_n(T^q)$, does $f$ have a partition into submaps, each of which is homotopic to a linear map?
\end{question}

\section{Partitions and the Jiang property}\label{jiangsection}
The Jiang subgroup for $n$-valued maps was defined in \cite{bdds20} as follows.
Let $f:X \to D_n(X)$ be an $n$-valued map with a chosen reference lift $\bar f:\tilde X \to F_n(\tilde X,\pi)$.
A homotopy $h \colon X \times I \to D_n(X)$ is a 
\emph{cyclic 
homotopy of $f$} if $h(x,0) = h(x,1) = f(x)$ for 
all $x \in X$.
A cyclic homotopy of $f$ will lift to 
a homotopy  $\bar h \colon \tilde X \times I
\to F_n(\tilde X, \pi)$ with $\bar h(\tilde x,0) =
\bar f(\tilde x)$  and $\bar h(\tilde x,1) = 
(\alpha; \eta)\bar f(\tilde x)$
for some $(\alpha; \eta) \in \pi_1(X)^n \rtimes 
\Sigma_n$.
The \emph{Jiang subgroup 
for $n$-valued maps} $J_n(\bar f) \subseteq \pi_1(X)^n \rtimes 
\Sigma_n$ is the set of all elements
$(\alpha; \eta) \in \pi_1(X)^n \rtimes \Sigma_n$
obtained in this way from cyclic homotopies.

Recall from the work of \cite{bdds20} that $f$ determines a homomorphism $\psi_f:\pi_1(X) \to \pi_1(X)^n \rtimes \Sigma_n$.

The following clarifies the work in \cite{bdds20}, showing that the traditional Jiang-results concerning the equality of the fixed point index are satisfied when the map is irreducible.

\begin{thm}\label{jiangirred}
Let $\psi_f(\pi_1(X)) \subset J_n(\bar f)$, and let $f$ be irreducible. Then all fixed point classes of $f$ have the same index.
\end{thm}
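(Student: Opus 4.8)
The plan is to follow the classical Jiang-theory argument, adapted to the $n$-valued setting developed in \cite{bdds20}, and to use irreducibility precisely at the point where the classical argument needs the fixed point classes to be ``connected'' by the Jiang subgroup. First I would recall that a fixed point class of $f$ has the form $p\Fix(\alpha\bar f_i)$ for some $\alpha \in \pi_1(X)$ and some index $i$, and that by the index-theory for $n$-valued maps in \cite{bdds20} the local index of such a class depends only on the Reidemeister class $[(\alpha,i)]$; call this integer $\ind(f,[(\alpha,i)])$. The goal is to show $\ind(f,[(\alpha,i)]) = \ind(f,[(\beta,j)])$ for any two classes. The standard mechanism is: a cyclic homotopy $h$ of $f$ is a homotopy from $f$ to itself, so it induces no change in the total index, but on the level of individual fixed point classes it permutes them and/or shifts their Reidemeister labels by the element $(\alpha;\eta) \in J_n(\bar f)$ it produces. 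Concretely, if $(\alpha;\eta)\in J_n(\bar f)$ then the class labelled $[(\beta,j)]$ is carried by the homotopy to the class labelled by some explicit modification of $[(\beta,j)]$ involving $\alpha$ and $\eta$, with equal index. The hypothesis $\psi_f(\pi_1(X)) \subset J_n(\bar f)$ means every covering transformation arising from $\psi_f$ is realized by a cyclic homotopy, so the whole group generated by these operations acts on the set of Reidemeister classes preserving index.

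Next I would make precise the orbit structure of this action. Two classes $[(\alpha,i)]$ and $[(\beta,j)]$ with $i \sim j$ (same $\sigma$-class) can be connected: first, because $i\sim j$, there is $\gamma$ with $\sigma_\gamma(j)=i$, and the relation $\alpha = \gamma\beta\phi_i(\gamma^{-1})$ can be ``reached'' by composing with cyclic homotopies realizing the elements $\psi_f(\gamma)$, exactly as in the proof that $\psi_f(\pi_1(X))\subseteq J_n(\bar f)$ forces all classes over a single $\sigma$-class to have equal index in the single-valued-like case. So the real content is: the hypothesis guarantees equal index within each $\sigma$-class. Then irreducibility enters: by Theorem \ref{sigmaclasspartition} (and Theorem \ref{submaplift}), $f$ is irreducible if and only if the $\bar f_i$ form a single $\sigma$-class. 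Hence under irreducibility there is only one $\sigma$-class, all indices are connected by the action, and they are therefore all equal.

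So the skeleton is: (1) recall index depends only on Reidemeister class; (2) show a cyclic homotopy producing $(\alpha;\eta)$ induces an index-preserving bijection on Reidemeister classes, computing explicitly how the label $[(\beta,j)]$ transforms; (3) feed in $\psi_f(\gamma)$ for varying $\gamma\in\pi_1(X)$ to see that all classes within one $\sigma$-class have equal index; (4) invoke Theorem \ref{sigmaclasspartition} to say irreducibility $\Leftrightarrow$ one $\sigma$-class, and conclude. I expect step (2) to be the main obstacle: one must carefully track how a lift $\bar h$ of the cyclic homotopy acts on $\Fix(\beta\bar f_j)$, identify the terminal end as $\Fix$ of $(\alpha;\eta)\cdot(\text{something})\cdot\bar f_{j}$ composed appropriately, and verify that the homotopy restricts to a homotopy of self-maps of the relevant covering exhibiting the index equality — essentially the $n$-valued analogue of the computation that a free homotopy through the Jiang subgroup preserves indices of fixed point classes. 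The semidirect-product bookkeeping (keeping the $\Sigma_n$-part $\eta$ straight, and checking it interacts correctly with $\sigma$) is where the care is needed; once that lemma is in hand, steps (3) and (4) are short.
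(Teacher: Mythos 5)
Your proposal is correct and follows essentially the same route as the paper: the Jiang hypothesis gives equal index for classes $p\Fix(\alpha\bar f_i)$ and $p\Fix(\beta\bar f_j)$ whenever $i\sim j$, and irreducibility means the $\bar f_i$ form a single $\sigma$-class (Theorem \ref{sigmaclasspartition}), so all classes are connected. The only difference is that the cyclic-homotopy bookkeeping you flag as the main obstacle in your step (2)--(3) is exactly Proposition 6.7 of \cite{bdds20}, which the paper simply cites, so those steps can be replaced by that citation.
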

\begin{proof}
Proposition 6.7 of \cite{bdds20} says that if $\psi_f(\pi_1(X)) \subset J_n(\bar f)$ and if there is some $\gamma$ with $\sigma_\gamma(j)=i$, then $p\Fix(\alpha \bar f_i)$ and $p\Fix(\beta \bar f_j)$ have the same index for any $\alpha, \beta \in \pi_1(X)$. Since $f$ is irreducible, such a $\gamma$ does exist for any $i$ and $j$, and thus all fixed point classes have the same index.
\end{proof}

We immediately obtain:
\begin{cor}
Let $\psi_f(\pi_1(X)) \subset J_n(\bar f)$, and let $f = \{ f_1,\dots, f_k\}$ be a partition into irreducibles. Then for each $i$, all fixed point classes of $f_i$ have the same index.
\end{cor}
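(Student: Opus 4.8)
The plan is to deduce this directly from Proposition 6.7 of \cite{bdds20}, running the same argument used for Theorem \ref{jiangirred} but restricted to a single $\sigma$-class. First I would fix $i$ and recall, via Theorem \ref{submaplift}, that the irreducible submap $f_i$ lifts to a single $\sigma$-class of the basic lift $\bar f = (\bar f_1,\dots,\bar f_n)$, say to $(\bar f_{j_1},\dots,\bar f_{j_{k_i}})$; choosing basepoints compatibly we may take this tuple to be the basic lift of $f_i$. Hence every fixed point class of $f_i$ is a nonempty set of the form $p\Fix(\alpha\bar f_{j_s})$ with $\alpha\in\pi_1(X)$ and $s\in\{1,\dots,k_i\}$, and each of these is in particular a fixed point class of $f$.

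Next I would take two fixed point classes of $f_i$, say $p\Fix(\alpha\bar f_{j_s})$ and $p\Fix(\beta\bar f_{j_{s'}})$. Because $\{\bar f_{j_1},\dots,\bar f_{j_{k_i}}\}$ is one $\sigma$-class we have $j_s\sim j_{s'}$, so there is $\gamma\in\pi_1(X)$ with $\sigma_\gamma(j_{s'})=j_s$. Now, since the global hypothesis $\psi_f(\pi_1(X))\subset J_n(\bar f)$ holds, Proposition 6.7 of \cite{bdds20} applies and gives that $p\Fix(\alpha\bar f_{j_s})$ and $p\Fix(\beta\bar f_{j_{s'}})$ have the same fixed point index. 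As the two classes were arbitrary, all fixed point classes of $f_i$ have the same index.

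The step that I expect to need the most care is the compatibility of indices: Proposition 6.7 speaks of the index of $p\Fix(\alpha\bar f_{j_s})$ as a fixed point class of the $n$-valued map $f$, whereas the Corollary concerns its index as a fixed point class of the $k_i$-valued map $f_i$, and one must check these agree. This follows from locality and additivity of Schirmer's index: a fixed point class is open and closed in the compact set $\Fix(f)$, so it has a neighborhood $U$ meeting $\Fix(f)$ only in that class; then no other submap $f_{i'}$ of the partition has a fixed point in $U$, so the index of $f$ on $U$ equals the index of $f_i$ on $U$ (this is the same index-additivity already used implicitly for Corollary \ref{Npartition}). I would record this as a one-line remark. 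An alternative would be to verify that $\psi_f(\pi_1(X))\subset J_n(\bar f)$ forces the analogous containment for $f_i$ and then quote Theorem \ref{jiangirred} for $f_i$; but a cyclic homotopy of $f$ need not respect the partition — its terminal covering transformation can permute the $\sigma$-classes among themselves — so this descent is not formal, and the route through Proposition 6.7 above seems preferable.
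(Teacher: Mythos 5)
Your proof is correct and is essentially the paper's own route: the corollary is obtained immediately from Theorem \ref{jiangirred} by the same mechanism you use, namely applying Proposition 6.7 of \cite{bdds20} within the single $\sigma$-class to which each irreducible $f_i$ lifts (Theorem \ref{submaplift}), rather than by re-applying Theorem \ref{jiangirred} to $f_i$ itself. Your added check that the index of a class of $f_i$ agrees with its index as a class of $f$, and your caution that the Jiang hypothesis need not descend to $f_i$, are sensible points the paper leaves implicit.
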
 

\section{Crabb's theory of finite-valued maps}\label{crabbsection}
Parallel to the work of Schirmer \& Brown on $n$-valued Nielsen theory, M.\ C.\ Crabb has developed a Nielsen theory in the context of multivalued maps $f:X\multimap X$ which satisfy $\#f(x)\le n$ for each $x$. The theory of $n$-valued maps is a special case of Crabb's theory, and in many cases Crabb's point of view provides opportunities for simpler computation. Crabb's survey paper \cite{crab18} details the connections between the two theories, which we will briefly review. 

In the most generality, Crabb discusses local maps on ENRs, but for our purposes we will focus on global maps on compact polyhedra. Let $X$ be a compact polyhedron, let $q: \hat X \to X$ be a finite covering map, and $F:\hat X \to X$ be any map. When $\hat X$ is an $n$-fold cover, the map $F:\hat X \to X$ resembles a map $X\to X$, but with the points of the domain each being replicated $n$-times according to the cover $q$. Specifically, $f(x) = F(q^{-1}(x))$ defines a multivalued map $f:X\multimap X$ with $\#f(x)\le n$ for each $x$. Crabb writes the pair $(F,q)$ as a fraction $F/q$, which he calls an ``$n$-valued map.'' To avoid confusion in terminology, we will call $F/q$ a \emph{finite-valued map with cardinality at most $n$}. Typically we will simply call it a \emph{finite-valued map}, as the cardinality bound will usually be clear (and will usually be $n$).

Any $n$-valued map $f:X\to D_n(X)$  can naturally be expressed as a finite-valued map, as follows. The following appears in more generality as Proposition 5.3 of \cite{crab18}.
\begin{thm}
Let $f:X\to D_n(X)$ be an $n$-valued map. Let $\hat X = \Gamma_f$ be the graph of $f$, and let $q:\hat X \to X$ be given by $q(x,y) = x$ and $F:\hat X \to X$ be given by $F(x,y)=y$. Then $F/q$ is a finite-valued map with cardinality at most $n$, and $f = F \circ q^{-1}$ for all $x$. 
\end{thm}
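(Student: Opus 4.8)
The plan is to verify the three assertions of the theorem directly from the definitions, the only subtlety being the continuity of $F$ and $q$ and the fact that $F/q$ genuinely reproduces $f$. First I would observe that $\hat X = \Gamma_f \subseteq X \times X$ inherits the subspace topology, and that $q$ and $F$ are simply the restrictions to $\Gamma_f$ of the two coordinate projections $X\times X \to X$, hence continuous. Since every space here is a finite polyhedron, $\Gamma_f$ is compact; this will be needed to know $q:\hat X \to X$ is a genuine finite covering map rather than merely a local homeomorphism.

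Next I would show that $q$ is an $n$-fold covering map. The key point is that $f:X \to D_n(Y)$ is continuous, and $r:F_n(X)\to D_n(X)$ is the covering that forgets ordering. Pulling back $r$ along $f$ gives a covering space over $X$ whose total space is $\{(x,(y_1,\dots,y_n)) : \{y_1,\dots,y_n\}=f(x)\}$; quotienting by the forgetting-of-order identification (which on each fiber is free) identifies this pullback with $\Gamma_f$ via $(x,(y_1,\dots,y_n))\mapsto (x, y_i)$, one sheet per coordinate. Hence $q$ is exactly the pullback covering, with fiber $q^{-1}(x) = \{x\}\times f(x)$ of cardinality $n$. Alternatively, and perhaps more cleanly for a paper at this level, one can argue locally: continuity of $f$ means each $x_0$ has a neighborhood $U$ on which $f$ lifts (after choosing a sheet of $r$) to an ordered tuple $(f_1,\dots,f_n):U\to F_n(X)$, and then $q^{-1}(U) = \bigsqcup_{i=1}^n \{(x,f_i(x)):x\in U\}$, each piece mapped homeomorphically onto $U$ by $q$. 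Either way this establishes that $F/q$ is a finite-valued map with cardinality at most $n$ (in fact exactly $n$) in Crabb's sense.

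Finally I would check the identity $f = F\circ q^{-1}$, interpreted as the set-valued statement $f(x) = F(q^{-1}(x))$ for all $x$: since $q^{-1}(x) = \{(x,y) : y\in f(x)\}$ and $F(x,y)=y$, we get $F(q^{-1}(x)) = \{y : y\in f(x)\} = f(x)$, immediately. I expect no real obstacle here; the only place demanding care is the claim that $q$ is a covering map, which relies essentially on the definition of continuity of $n$-valued maps (equivalently, on $r:F_n(X)\to D_n(X)$ being a covering and the pullback of a covering being a covering). Since this result is quoted from \cite{crab18} in greater generality, I would keep the proof short, citing the pullback-of-a-covering argument and the elementary verification of the set identity.
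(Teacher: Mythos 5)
Your proposal is correct, and for the part the paper actually argues --- the set identity $f(x)=F(q^{-1}(x))$ --- it is the same elementary double-inclusion check, which the paper carries out explicitly and you rightly call immediate. The bulk of your effort goes into verifying that $q:\Gamma_f\to X$ is an $n$-fold covering, i.e.\ that $F/q$ really is a finite-valued map in Crabb's sense; the paper dismisses this with ``it is clear that $F/q$ is finite-valued,'' so you are filling a step the paper omits rather than taking a different route, and that extra care is legitimate (and needed, since Crabb's definition requires $q$ to be a finite cover). Of your two arguments for this, the local one is clean and correct: continuity of $f$ plus the covering $r:F_n(X)\to D_n(X)$ gives local ordered selections $f_1,\dots,f_n$ on a neighborhood $U$, and $q^{-1}(U)$ is the disjoint union of their graphs, each mapped homeomorphically to $U$. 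The pullback description needs a small correction as stated: the pullback of $r$ along $f$ has fibers of cardinality $n!$, and $\Gamma_f$ is its quotient by the stabilizer $\Sigma_{n-1}$ of one position (equivalently, the map $(x,(y_1,\dots,y_n))\mapsto(x,y_1)$ is itself an $(n-1)!$-fold cover of $\Gamma_f$), not the quotient by the full forgetting-of-order action, which would return $X$ itself; with that adjustment the pullback argument also goes through.
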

\begin{proof}
It is clear that $F/q$ is finite-valued, we need only show that $f = F \circ q^{-1}$. Take some $x\in X$, and we will show that $f(x)$ and $F(q^{-1}(x))$ are equal as sets. 

First take $y\in f(x)$, which means that $(x,y)\in \Gamma_f$, which implies $(x,y) \in q^{-1}(x)$. Then applying $F$ gives $y \in F(q^{-1}(x))$.

Conversely, take $y\in F(q^{-1}(x))$, so there is some pair $z\in q^{-1}(x) \subset \Gamma_f$ with $F(z)=y$. Since $z \in q^{-1}(x)$, the first coordinate of $z$ must be $x$. Since $F(z)=y$, the second coordinate of $z$ must be $y$. Thus $z=(x,y)$, and so $(x,y)\in \Gamma_f$ and thus $y\in f(x)$.
\end{proof}

When $f:X\to D_n(X)$ and we use the coordinate projections $q:\Gamma_f \to X$ and $F:\Gamma_f \to X$ as above, the finite-valued map $F/q$ will be called \emph{the finite-valued map associated to $f$}.  

The fixed point set of a finite-valued map $F/q$ is defined as a coincidence set:
\[ \Fix(F/q) = \Coin(F,q) = \{ \hat x \in \hat X \mid F(\hat x) = q(\hat x) \}. \]
Note that, when $F/q$ is the finite-valued map associated to an $n$-valued map $f$, we have $\Fix(F/q) \subset \hat X$ while $\Fix(f) \subset X$. In this case we will have $q\Fix(F/q) = \Fix(f)$. Generally for a finite-valued map $F/q$, the sets $\Fix(F/q)$ and $q\Fix(F/q)$ may have a different number of elements.

%One computational advantage of Crabb's approach is that the usual invariants of topological coincidence theory can be used to define appropriate invariants of the finite-valued map. These constructions are most convenient when $X$ is an orientable manifold, but Crabb's definitions allow more general spaces.
%For example when $X$ is a smooth manifold, the fixed point index for some fixed point $\hat x$ of $F/q$ is equal to the coincidence index of $(F,q)$ at $\hat x$ (this result also appears as \cite[Lemma 2.2]{bl10}). 
Crabb defines a fixed point index, Lefschetz number, and Nielsen number for finite-valued maps, and proves that his definitions match the corresponding definitions by Schirmer and Brown when $F/q$ corresponds to an $n$-valued map. 

For any finite-valued map $F/q$, we may consider the set of components of $\hat X$, which we write $\hat X_1,\dots, \hat X_k$. Then $F:\hat X \to X$ naturally decomposes into $k$ component maps $F_i:\hat X_i \to X$. 

From Theorem \ref{graphpartition} we immediately obtain:
\begin{thm}
Let $f:X\to D_n(X)$ be an $n$-valued map, and let $F/q$ be the associated finite-valued map. Then there is an irreducible partition $f=\{f_1,\dots, f_k\}$ if and only if $\hat X$ has $k$ components, and in this case we have $f_i = F_i \circ q^{-1}$ for each $i$. 
\end{thm}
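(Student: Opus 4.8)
The plan is to reduce the statement almost entirely to Theorem \ref{graphpartition}, using the identification of the graph $\Gamma_f$ with the covering space $\hat X$ in the construction of the associated finite-valued map. First I would recall that, by definition of the finite-valued map associated to $f$, we have $\hat X = \Gamma_f$ with $q$ and $F$ the two coordinate projections. Theorem \ref{graphpartition} already tells us that $f$ admits an irreducible partition $f = \{f_1,\dots,f_k\}$ if and only if $\Gamma_f$ has $k$ path components, and moreover that these path components are exactly the subsets $\Gamma_{f_i} \subseteq \Gamma_f$. Since $\hat X = \Gamma_f$ is a compact polyhedron, its path components coincide with its connected components, so ``$\Gamma_f$ has $k$ path components'' is the same as ``$\hat X$ has $k$ components'', which gives the equivalence asserted in the theorem.

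Next I would pin down the identification $f_i = F_i \circ q^{-1}$. Under the correspondence of Theorem \ref{graphpartition}, the $i$-th component $\hat X_i$ of $\hat X$ is precisely $\Gamma_{f_i}$. The component map $F_i : \hat X_i \to X$ is the restriction of $F$, i.e.\ the second-coordinate projection restricted to $\Gamma_{f_i}$, and $q$ restricted to $\hat X_i$ is the first-coordinate projection restricted to $\Gamma_{f_i}$. So the claim $f_i = F_i \circ q^{-1}$ is exactly the statement that the finite-valued map $F_i/(q|_{\hat X_i})$ associated to the graph $\Gamma_{f_i}$ recovers $f_i$ — but this is a direct instance of the theorem preceding this one (the one asserting $f = F\circ q^{-1}$ for the finite-valued map associated to any $n$-valued map), applied to the $k_i$-valued map $f_i$ in place of $f$. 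Concretely, for $x \in X$ one checks that $F_i(q^{-1}(x) \cap \hat X_i) = \{ y \mid (x,y) \in \Gamma_{f_i}\} = f_i(x)$, by the same short set-chasing argument used in that earlier proof.

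I would then assemble these observations into the proof: invoke Theorem \ref{graphpartition} for the biconditional between existence of an irreducible $k$-partition and $\hat X = \Gamma_f$ having $k$ components, note that the components of $\hat X$ are the $\Gamma_{f_i}$, and apply the earlier ``associated finite-valued map'' theorem componentwise to identify $f_i = F_i \circ q^{-1}$. There is essentially no obstacle here; the statement is explicitly flagged as an immediate consequence of Theorem \ref{graphpartition}. The only point requiring a word of care is the interchange of ``path component'' and ``component'', which is justified by the standing assumption that all spaces are finite polyhedra (hence locally path connected), and the bookkeeping that the component $\hat X_i$, the graph $\Gamma_{f_i}$, and the restricted projections all match up so that the decomposition of $F$ into its component maps $F_i$ is the same decomposition as the one coming from the partition of $f$.
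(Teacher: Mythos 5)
Your proposal is correct and follows exactly the route the paper intends: the paper states this result as an immediate consequence of Theorem \ref{graphpartition} together with the identification $\hat X = \Gamma_f$ and the preceding theorem that $f = F\circ q^{-1}$, which is precisely the reduction you carry out (with the path-component versus component point spelled out, which is fine given the finite-polyhedron assumption).
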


Thus if an $n$-valued map is presented as a finite-valued map $F/q$ (in particular if we are given the structure of the covering $q:\hat X \to X$), then determining the irreducible partition of $f$ becomes obvious: it is simply given by the component maps $F_i/q$. Crabb discusses this decomposition of $F/q$ into components $F_i/q$, and obtains additivity formulas analogous to \eqref{Nparteq}. 

Furthermore, the set of all finite-valued maps on a space $X$ is often easy to classify from this point of view. 
For example if $X$ is a torus, then any connected finite cover of $X$ is also a torus of the same dimension. Thus if $F/q$ is a finite-valued map on $X$, then the cover $\hat X$ naturally decomposes into components $\hat X_i$, each of which are tori, and so $F/q$ has an irreducible partition into finite-valued torus maps $F_i/q$. Each $F_i:\hat X_i \to X$ is a single-valued self-map of a torus, and so can be linearized into a matrix $A_i$, and thus $N(F_i/q) = |\det(I-A_i)|$, and so by \eqref{Nparteq} we have 
%\begin{equation}\label{crabbN}
\[
 N(F/q) = \sum_{i} |\det(I-A_i)|. 
\]% \end{equation}
Thus we have a solution to the problem of computing $N(f)$ when $f:T^q \to D_n(T^q)$ is an $n$-valued torus map, provided that $f$ is specified as a finite-valued map $F/q$ and we are given enough information to deduce the matrices $A_i$ of the component maps $F_i/q$.

The paragraph above would appear to answer Question \ref{toriquest} in the affirmative, but the situation is more subtle. Crabb's work shows that every finite-valued torus map $F/q$ is homotopic through finite-valued maps to one with linear partitions. Even if $F/q$ corresponds to an $n$-valued map, there is no guarantee that the intermediate maps of the homotopy linearizing the components of $F/q$ will be $n$-valued.

We note that although the Nielsen number defined by Crabb's theory agrees with that defined by Schirmer, the minimal number of fixed points may not. Specifically, if $f:X\to D_n(X)$ is an $n$-valued map, let $\MF(f)$ be the minimal number of fixed points of any $n$-valued map homotopic (by $n$-valued homotopy) to $f$, and if $F/q$ is a finite-valued map, let $\MF(F/q)$ be the minimal number of fixed points of any finite-valued map $G/q$ with $G$ homotopic (as a single-valued map) to $F$.

If $F/q$ is the finite-valued map associated to $f$, then both $\MF(f)$ and $\MF(F/q)$ will be bounded below by the Nielsen number $N(f)=N(F/q)$. Any $n$-valued homotopy of $f$ corresponds naturally to a finite-valued homotopy of $F/q$, but the converse may not be true. Thus we will have $N(f) \le \MF(F/q) \le \MF(f)$. 
We suspect that the second inequality can be strict in some cases, but we do not have an example.

\begin{question}
Let $f:X\to D_n(X)$ be an $n$-valued map, and let $F/q$ be the corresponding finite-valued map. Is $\MF(F/q) = \MF(f)$?
\end{question}

The equality in question is related to the Wecken problem for $n$-valued maps: for which spaces and selfmaps $f:X \to D_n(X)$ will we have $N(f)=\MF(f)$? This can be a difficult question even for simple spaces, see \cite{bces19} which proves the Wecken property for any $n$-valued map of the sphere $S^2$. Whenever $N(f)=\MF(f)$, we will automatically have $\MF(F/q) = \MF(f)$.

\end{document}